\newtheorem{theorem}{Theorem}[section]
\newtheorem{lemma}[theorem]{Lemma}
\newtheorem{corollary}[theorem]{Corollary}
\newtheorem{proposition}[theorem]{Proposition}
\newtheorem{conjecture}[theorem]{Conjecture}
\theoremstyle{definition}
\theoremstyle{remark}
\numberwithin{equation}{section}
\newcommand{\PP}{\mathcal{P}}
\newcommand{\C}{{\mathbb C}}
\newcommand{\R}{{\mathbb R}}
\newcommand{\Q}{{\mathbb Q}}
\newcommand{\Z}{{\mathbb Z}}
\newcommand{\li}{\textnormal{li}}
\renewcommand{\Re}{\mathop{\rm Re}}
\renewcommand{\Im}{\mathop{\rm Im}}
\begin{document}

\title[Primes in prime number races]{Primes in prime number races}

\author{Jared Duker Lichtman}
\address{DPMMS, Centre for Mathematical Sciences, University of Cambridge, Wilberforce Road, Cambridge CB3 0WB, UK}

\email{jdl65@cam.ac.uk}

\author{Greg Martin}
\address{Department of Mathematics, University of British Columbia, Room 121, 1984 Mathematics Road, Vancouver, BC, Canada V6T 1Z2}
\email{gerg@math.ubc.ca}

\author{Carl Pomerance}
\address{Department of Mathematics, Dartmouth College, Hanover, NH 03755}
\email{carl.pomerance@dartmouth.edu}

\subjclass[2010]{Primary 11A05, 11N05; Secondary 11B83, 11M26}

\date{January 3, 2019.}


\keywords{prime number race, Chebyshev's bias, Mertens product formula, logarithmic density, limiting distribution, primitive set, primitive sequence}

\begin{abstract}
Rubinstein and Sarnak  have shown,  conditional on the Riemann hypothesis (RH)  and the linear independence hypothesis (LI) on the non-real zeros of $\zeta(s)$, that
the set of real numbers $x\ge2$ for which $\pi(x)>\li(x)$ has a logarithmic density, which they computed to be
about $2.6\times10^{-7}$. 
A natural problem is to examine the actual primes in this race. We prove, assuming RH and LI, that the logarithmic density of the set of primes $p$ for which $\pi(p)>\li(p)$ relative to the prime numbers exists
and is the same as the Rubinstein--Sarnak density.    We also extend such results to a broad class of`prime number races, including the ``Mertens race" between $\prod_{p< x}(1-1/p)^{-1}$ and $e^{\gamma}\log x$ and the ``Zhang race" between $\sum_{p\ge x}1/(p\log p)$ and $1/\log x$. These latter results resolve a question of the first and third author from a previous paper, leading to further progress on a 1988 conjecture of Erd\H os on primitive sets.
\end{abstract}

\maketitle



\section{Introduction}

In the early twentieth century it was noticed that while the prime-counting function $\pi(x)$
and the logarithmic integral function $\li(x) = \int_0^x dt/\log t$ are satisfyingly close together for all values of $x$ where
both had been computed, $\li(x)$ always seemed to be slightly larger than $\pi(x)$.  It was a breakthrough when
Littlewood~\cite{Litt} proved that in fact the sign of $\li(x)-\pi(x)$ changes infinitely often as $x\to\infty$.
We still do not know a specific numerical value of $x$ for which this difference is negative, but the smallest such value
is suspected to be very large, near $1.4\times10^{316}$ (see~\cite{BH} and subsequent refinements).  We do know that $\pi(x)<\li(x)$ for all $2\le x\le10^{19}$,
thanks to calculations of B\"uthe~\cite{Bu}.

Another important development concerning this ``race" between $\pi(x)$ and $\li(x)$ was the
paper of Rubinstein and Sarnak~\cite{RubSarn}.  Assuming some standard conjectures
about the zeros of the Riemann zeta-function, namely the Riemann hypothesis and
a linear independence hypothesis on the zeros of $\zeta(\frac12+it)$, they showed that the logarithmic density $\delta(\Pi)$ of
the set
\begin{equation} \label{Pi defn}
\Pi := \{x\in\R_{\ge1}\colon \pi(x)>\li(x)\}
\end{equation}
exists and is a positive number
\begin{equation} \label{Delta defn}
\delta(\Pi) = \Delta \mathop{\dot=} 2.6\times10^{-7}.
\end{equation}
Here, given a set $\mathcal M \subset \R_{\ge1}$, the logarithmic density of $\mathcal M$ is defined as usual as
\begin{align*}
    \delta(\mathcal M) := \lim_{x\to \infty}\frac{1}{\log x}\int_{t\in \mathcal M\cap [1,x]}\frac{dt}{t},
\end{align*}
provided the limit exists. 

Since $\pi(x)$ counts primes, it is natural to consider the actual primes in the race:
What can be said about the set of primes $p$ for which $\pi(p)>\li(p)$?
We define the discrete logarithmic density of a set $\mathcal M \subset \R_{\ge1}$ relative to the prime numbers as
\begin{align*}
    \delta'(\mathcal M) := \lim_{x\to \infty}\frac{1}{\log\log x}\sum_{\substack{p \le x\\p\in \mathcal M}}\frac{1}{p},
\end{align*}
if the limit exists.
Due to the partial summation formula
\begin{align*}
    \sum_{\substack{p \le x\\p\in \mathcal M}}\frac{1}{p} = \frac{1}{\log x}\sum_{\substack{p \le x\\p\in \mathcal M}}\frac{\log p}{p} + \int_2^x \frac{1}{t\log^2 t}\sum_{\substack{p \le t\\p\in \mathcal M}}\frac{\log p}{p}\,dt,
\end{align*}
we see that if the modified limit
\begin{equation} \label{d^* defn}
    \delta^*(\mathcal M) := \lim_{x\to \infty}\frac{1}{\log x}\sum_{\substack{p \le x\\p\in \mathcal M}}\frac{\log p}{p}
\end{equation}
exists, then it is equal to $\delta'(\mathcal M)$. (The converse does not hold in general, since $\delta^*(\mathcal M)$ might not exist even if $\delta'(\mathcal M)$ does. For example, let $\mathcal P_k$ be the set of all primes between $2^{(2k-1)!}$ and $2^{(2k)!}$, and let $\mathcal P=\bigcup_{k\ge1} \mathcal P_k$. Then $\delta'(\mathcal P) = 1/2$ but $\delta^*(\mathcal P)$ does not exist.)
We shall find it more convenient to deal with $\delta^*(\mathcal M)$ in our proofs below. We also let $\overline{\delta}^*$ and $\underline{\delta}^*$ denote the expression on the right-hand side of equation~\eqref{d^* defn} with $\lim$ replaced by $\limsup$ and $\liminf$, respectively.

Our general philosophy is that the primes are reasonably randomly distributed; in particular, there seems to be no reason for the primes to conspire to lie in the set of real numbers $\Pi$ any more or less often than expected. 
With the aid of an old theorem of Selberg that most short intervals contain the ``right" number of primes,
we prove that there is no such conspiracy; more precisely we prove, under the same two assumptions as Rubinstein and Sarnak, that $\delta^*(\Pi)=\Delta$ (see Theorem~\ref{thm:pivsli}). Moreover, we prove similar results---comparing the logarithmic density of a set of real numbers to the relative logarithmic density of the primes lying in that set---for a number of other prime races, some of which have not been considered before (see Theorems~\ref{thm:mertens} and~\ref{Zhang primes density thm}).  These results resolve some problems from \cite{LP} and so make progress
on the Erd\H os conjecture on primitive sets (see Section \ref{sec:mertens}).

Finally we remark that our approach applies equally to prime races involving residue classes.
To do this one would replace Selberg's theorem on the distribution of primes in almost all short intervals 
with a result of Koukoulopoulos \cite[Theorem 1.1]{K} which does the same for
primes in a residue class to a fixed modulus.

\section{A key result}

For a ``naturally occurring'' set $\mathcal M$ of real numbers for which $\delta(\mathcal M)$ exists,
it is natural to wonder how $\delta^*(\mathcal M)$ compares to $\delta(\mathcal M)$. 
We prove the two densities are equal in the case of sets of the form
\begin{equation} \label{M_a defn}
\mathcal M_a(f) = \{ x\colon f(x) > a\}
\end{equation}
for functions $f$ that are suitably nice.

\begin{theorem} \label{thm:density}
Consider a function $f\colon\R_{\ge1}\to \R$ satisfying the following two conditions:
\begin{enumerate}
    \item[(a)] For all real numbers $a>b$, there exists $x_0=x_0(a,b)$ such that for all $x\ge x_0$, if $f(x)>a$ then $f(z)>b$ for all $z\in [x,x+x^{1/3}]$; and similarly for the function $-f(x)$.
    \item[(b)]  The function $f$ has a continuous logarithmic distribution function: for all $a\in\R$, the set $\mathcal M_a(f)$ has a well-defined logarithmic density $\delta(\mathcal M_a(f))$, and the map $a\mapsto \delta(\mathcal M_a(f))$ is continuous.
\end{enumerate}
Then for every real number $a$, the relative density $\delta^*(\mathcal M_a(f))$ exists and is equal to $\delta(\mathcal M_a(f))$.
\end{theorem}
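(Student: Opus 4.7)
The plan is to establish a one-sided inequality and exploit symmetry. Specifically, I aim to prove
\[
\overline\delta^*(\mathcal M_a(f)) \le \delta(\mathcal M_{a-\epsilon}(f))
\]
for every $\epsilon > 0$. Applying the same bound to $-f$ (which also satisfies (a) and whose induced distribution function is continuous by (b)) yields $\overline\delta^*(\mathcal M_a(f)^c) \le \delta(\mathcal M_{a+\epsilon}(f)^c)$, up to boundary sets $\{f=a\}$ of density zero handled by (b). Since Mertens's theorem gives $\delta^*(\R_{\ge 1})=1$, rearranging produces the matching $\underline\delta^*(\mathcal M_a(f)) \ge \delta(\mathcal M_{a+\epsilon}(f))$. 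Sending $\epsilon \to 0$ and using condition (b) sandwiches both $\limsup$ and $\liminf$ at $\delta(\mathcal M_a(f))$, so $\delta^*(\mathcal M_a(f))$ exists and equals $\delta(\mathcal M_a(f))$.

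For the one-sided bound, I would partition $[J, X]$ into intervals $I_j = [y_j, y_{j+1}]$ with $y_{j+1} = y_j + y_j^{1/3}$, taking $J$ large enough for condition (a) to apply at threshold $\epsilon/2$. Classify each $I_j$ by the value of $f$ at its left endpoint: \textbf{Type G} if $f(y_j) > a - \epsilon/2$, in which case condition (a) forces $f > a - \epsilon$ throughout $I_j$ so $I_j \subseteq \mathcal M_{a-\epsilon}(f)$; and \textbf{Type B} if $f(y_j) < a - \epsilon/2$, in which case condition (a) applied to $-f$ forces $f < a$ throughout $I_j$ so $I_j \cap \mathcal M_a(f) = \emptyset$. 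Consequently $\sum_{p \le X,\ p \in \mathcal M_a(f)} \log p/p \le O(1) + \sum_{j \in G} \sum_{p \in I_j} \log p/p$.

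Next, invoke a Selberg-type theorem: for every $\eta > 0$, the exceptional set $\mathcal E(\eta) = \{y: |\theta(y + y^{1/3}) - \theta(y) - y^{1/3}| > \eta y^{1/3}\}$ has logarithmic measure $o(\log X)$ in $[1, X]$, obtained by applying a second-moment bound dyadically. For $y_j \notin \mathcal E(\eta)$, one has $\sum_{p \in I_j} \log p / p \le (1+\eta)(1+o(1))|I_j|/y_j$; the combined contribution from $y_j \in \mathcal E(\eta)$ is controlled by Mertens's identity $\sum_{p \le X}\log p/p = \log X + O(1)$, giving at most $\eta \log X + o(\log X)$. A Riemann-sum step (using $t^{-1} = y_j^{-1}(1 + O(y_j^{-2/3}))$ for $t \in I_j$, and the convergence of $\sum_j y_j^{-4/3}$) converts $\sum_{j \in G} |I_j|/y_j$ to $(1+o(1))\int_{\bigcup_{j \in G} I_j}\,dt/t$, bounded in turn by $(1+o(1))\int_{\mathcal M_{a-\epsilon}(f)\cap [J, X+X^{1/3}]}\,dt/t = (\delta(\mathcal M_{a-\epsilon}(f)) + o(1))\log X$. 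Combining, dividing by $\log X$, and letting $\eta \to 0$ completes the one-sided bound.

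The main obstacle is extracting Selberg's theorem in the form required, namely that the exceptional set has small \emph{logarithmic} measure rather than merely small Lebesgue measure: a dyadic decomposition reduces this to the local statement on each $[2^k, 2^{k+1}]$, but the uniform control across scales needs to be set up with care. The remaining steps---the Riemann-sum approximation, the transfer between $\mathcal M_a(f)$ and $\mathcal M_{a \pm \epsilon}(f)$, and the complementary bound handled by (b)---are essentially bookkeeping once this Selberg input is in place.
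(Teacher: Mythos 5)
Your high-level strategy matches the paper's: partition $[1,X]$ into short intervals $I_j = [y_j, y_{j+1}]$ with $y_{j+1}=y_j+y_j^{1/3}$, classify each interval via condition~(a), bound the prime contribution on good intervals with Selberg, and pass to the integral by a Riemann sum. Your variations---running the one-sided bound on $-f$ and closing via Mertens rather than proving the lower bound directly, and controlling the exceptional contribution by subtraction from Mertens' identity rather than by Brun--Titchmarsh as the paper does in its display~\eqref{bad loggy bound}---are legitimate and would work if the pieces were in place.

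However, there is a genuine gap, and you have misdiagnosed where it lies. You flag the issue as ``extracting Selberg's theorem in the form required, namely that the exceptional set has small \emph{logarithmic} measure rather than merely small Lebesgue measure.'' That passage is not the problem: upper natural density zero implies logarithmic density zero by an elementary partial-summation argument, uniformly across scales, with no care required. The actual obstruction is the passage from the \emph{continuous} Selberg statement (the set $\mathcal E(\eta)\subset\R$ of bad $y$ has natural density~$0$) to the \emph{discrete} statement you use (the indices $j$ with $y_j\in\mathcal E(\eta)$ are sparse, i.e.\ $\sum_{j:\,y_j\in\mathcal E(\eta),\,y_j\le X} y_j^{-2/3}=o(\log X)$). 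These are not equivalent: $\mathcal E(\eta)$ could have tiny measure and still contain every single $y_j$, since $\{y_j\}$ is a measure-zero set of specially chosen points. Your Mertens subtraction argument does not circumvent this; tracing it through, the bound you obtain on $\sum_{\mathrm{exc}}\sum_{p\in I_j}\log p/p$ still has a term proportional to $\int_{\bigcup_{y_j\in\mathcal E(\eta)} I_j}\,dt/t$, which you need to be $o(\log X)$ --- exactly the discrete statement. The paper supplies this missing step as Lemma~\ref{lemma:bad}: if $y_k\in\mathcal E(\epsilon)$ then, using Brun--Titchmarsh to show that $\pi(I(y))$ cannot change much as $y$ ranges over a positive-proportion subinterval $J_k\subset I_k$, the whole of $J_k$ lies in $\mathcal E(\epsilon/3)$. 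This forces the union of bad $I_k$'s to have measure comparable to the continuous exceptional set, bridging the discrete--continuous gap. Without this lemma (or an equivalent), the argument does not close, so Brun--Titchmarsh is not actually avoidable in your scheme even though you removed it from the other place the paper uses it.
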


\noindent 
It is worth noting that the assumptions and conclusion of the theorem imply that the relative density map $a\mapsto \delta^*(\mathcal M_a(f))$ is also continuous; in particular, ``ties have density~$0$'', meaning that $\delta^*(\{x\colon f(x)=a\}) = 0$. Thus there is no difference between considering $f(x)>a$ and considering $f(x)\ge a$ in the situations we investigate.

Recall the Linear Independence hypothesis (LI), which asserts that the sequence of numbers $\gamma_n > 0$ such that $\zeta(\tfrac{1}{2} + i\gamma_n) = 0$ is linearly independent over $\Q$.

\begin{theorem}\label{thm:pivsli}
Let the set $\Pi$ and the number $\Delta=\delta(\Pi)$ be defined as in equations~\eqref{Pi defn} and~\eqref{Delta defn}, respectively. Assuming RH and LI, the discrete logarithmic density of $\Pi$ relative to the primes is $\delta^*(\Pi) = \Delta$.
\end{theorem}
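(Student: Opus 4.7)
The plan is to deduce Theorem~\ref{thm:pivsli} from Theorem~\ref{thm:density} applied to the standard Rubinstein--Sarnak normalization
$$f(x) := \frac{\log x}{\sqrt x}\bigl(\pi(x) - \li(x)\bigr),$$
so that $\Pi = \mathcal M_0(f)$ and $\Delta = \delta(\mathcal M_0(f))$. Once hypotheses (a) and (b) of Theorem~\ref{thm:density} are verified for this $f$, specializing its conclusion to $a = 0$ yields $\delta^*(\Pi) = \Delta$, which is exactly what is to be proved.

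Verification of hypothesis (b) is essentially the content of the Rubinstein--Sarnak theorem itself. Under RH and LI, they show that $f$ admits a limiting logarithmic distribution $\mu$, represented as an infinite convolution indexed by the ordinates $\gamma_n$, and that $\mu$ is absolutely continuous with respect to Lebesgue measure. Absolute continuity precludes point masses, so the map $a \mapsto \mu\bigl((a,\infty)\bigr) = \delta(\mathcal M_a(f))$ is continuous, as required.

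The substantive step is hypothesis (a), which I would establish in the stronger form $|f(z) - f(x)| = o(1)$ uniformly for $z \in [x, x+x^{1/3}]$ as $x \to \infty$. Decompose
$$f(z) - f(x) = \frac{\log z}{\sqrt z}\bigl[(\pi(z) - \pi(x)) - (\li(z) - \li(x))\bigr] + \Bigl(\frac{\log z}{\sqrt z} - \frac{\log x}{\sqrt x}\Bigr)\bigl(\pi(x) - \li(x)\bigr).$$
For the first bracket, Brun--Titchmarsh gives $\pi(z) - \pi(x) \ll x^{1/3}/\log x$, and an elementary estimate gives $\li(z) - \li(x) \ll x^{1/3}/\log x$; after multiplying by $(\log z)/\sqrt z$, this contributes $O(x^{-1/6})$. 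For the second piece, a mean-value estimate yields $\bigl|\tfrac{\log z}{\sqrt z} - \tfrac{\log x}{\sqrt x}\bigr| \ll (\log x)/x^{7/6}$, while RH gives $|\pi(x) - \li(x)| \ll \sqrt{x}\log x$; this contributes $O((\log^2 x)/x^{2/3})$. Both are $o(1)$, and the same bounds apply to $-f$. Hence for any reals $a > b$, once $x$ is sufficiently large, $f(x) > a$ forces $f(z) > b$ throughout $[x,x+x^{1/3}]$.

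I do not anticipate a real obstacle in this deduction: the estimates above rely only on standard analytic inputs once the normalization is fixed. The genuine difficulty lies upstream in Theorem~\ref{thm:density} itself, where Selberg's theorem on primes in almost all short intervals is needed to transfer between the continuous logarithmic density of $\mathcal M_a(f)$ and its discrete logarithmic density over the primes.
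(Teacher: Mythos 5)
Your proposal is correct and follows the same strategy as the paper: apply Theorem~\ref{thm:density} to $E_\pi(x) = (\log x/\sqrt{x})(\pi(x)-\li(x))$, verify (a) by bounding $|E_\pi(z)-E_\pi(x)|$ on $[x,x+x^{1/3}]$, and cite Rubinstein--Sarnak's absolute continuity of the limiting distribution for (b). The only cosmetic difference is that you invoke RH (and Brun--Titchmarsh) to control the second piece of the decomposition, whereas the trivial bounds $\pi(x),\li(x)\ll x$ and $\pi(z)-\pi(x)\le x^{1/3}$ already suffice, as the paper uses.
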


\begin{proof}[Proof of Theorem \ref{thm:pivsli} via Theorem \ref{thm:density}]
Consider the normalized error function
$$
E_{\pi}(x)=\frac{\log x}{\sqrt{x}}(\pi(x) - \li(x)),
$$
and note that $\Pi = \{x\colon \pi(x) > \li(x)\} = \mathcal M_0(E_{\pi})$. It thus suffices to show that $E_{\pi}$ satisfies conditions~(a) and~(b) of Theorem~\ref{thm:density}.

Consider any number $z\in[x,x+x^{1/3}]$. We have $\pi(z)-\pi(x) \le x^{1/3}$ and $\li(z) - \li(x) \le x^{1/3}$ trivially, and hence $|E_{\pi}(z)-E_{\pi}(x)| \le (2\log x)/{x^{1/6}}$.  Since the right-hand side tends to $0$, this inequality easily implies condition~(a) of the theorem.

Moreover, condition (b)---namely the fact that $E_{\pi}$ has a continuous limiting logarithmic distribution---is a consequence (under RH and LI) of the work of Rubinstein and Sarnak: first, they establish a formula for the Fourier transform of this limiting logarithmic distribution (see~\cite[equation~(3.4) and the paragraph following]{RubSarn}). They then argue that this Fourier transform is rapidly decaying (see~\cite[Section 2.3]{FM} for a more explicit version of their method). From this they conclude that the distribution itself is continuous (and indeed much more, namely that it corresponds to an analytic density function---see~\cite[Remark 1.3]{RubSarn}).
\end{proof}

The continuity of the limiting logarithmic distribution of $E_{\pi}$ can be deduced from a substantially weakened version of LI: indeed, we only require the imaginary part of one nontrivial zero of $\zeta(s)$ to not be a rational linear combination of other such imaginary parts (see~\cite[Theorem 2.2(2)]{Devin}).

In the next section we prove Theorem \ref{thm:density}, which will complete the proof of Theorem~\ref{thm:pivsli}.

\section{Proof of Theorem \ref{thm:density}}
We begin with some notation. For any interval $I$ of real numbers, let $\pi(I)$ denote the number of primes in~$I$. For any positive real number $y$, define the half-open interval $I(y):=(y,y+y^{1/3}]$. Define an increasing sequence of real numbers recursively by $y_1=1$ and $y_{k+1} = y_k+y_k^{1/3}$ for $k\ge1$, and let $I_k:=I(y_k) = (y_k,y_{k+1}]$. We have thus partitioned $\R_{>1} = \bigcup_{k=1}^\infty I_k$ into a disjoint union of short half-open intervals.

\begin{lemma} \label{yk sum lemma}
For any fixed real number $\alpha>\frac23$, we have $\sum_{k=1}^\infty y_k^{-\alpha} \ll 1$.
\end{lemma}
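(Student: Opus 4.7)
The plan is to show that $y_k$ grows like $k^{3/2}$, so that $\sum y_k^{-\alpha}$ is dominated by a convergent $p$-series. The heuristic is that the recursion $y_{k+1}-y_k = y_k^{1/3}$ approximates the ODE $y'=y^{1/3}$, whose solutions satisfy $y(k)\sim (2k/3)^{3/2}$. To make this rigorous, it is cleaner to study the transformed sequence $y_k^{2/3}$ and show that its consecutive differences are bounded below by a positive absolute constant.

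First I would apply the mean value theorem to the function $g(t)=t^{2/3}$ on $[y_k,y_{k+1}]$. This yields some $\xi_k\in[y_k,y_{k+1}]$ with
\[
y_{k+1}^{2/3} - y_k^{2/3} = \tfrac{2}{3}\xi_k^{-1/3}(y_{k+1}-y_k) = \tfrac{2}{3}\xi_k^{-1/3}y_k^{1/3}.
\]
Since $\xi_k\le y_{k+1} = y_k+y_k^{1/3} \le 2y_k$ (using $y_k\ge 1$), we get $\xi_k^{-1/3}\ge (2y_k)^{-1/3}$, and so
\[
y_{k+1}^{2/3} - y_k^{2/3} \;\ge\; \tfrac{2}{3}\cdot 2^{-1/3} \;=:\; c_0 > 0.
\]

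Telescoping this bound from $1$ to $k-1$ gives $y_k^{2/3} \ge 1 + c_0(k-1)$, hence $y_k \gg k^{3/2}$ for an absolute implied constant. Therefore
\[
\sum_{k=1}^\infty y_k^{-\alpha} \;\ll\; \sum_{k=1}^\infty k^{-3\alpha/2},
\]
and this last series converges precisely when $3\alpha/2 > 1$, which is exactly the hypothesis $\alpha > 2/3$.

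There is no real obstacle here; the only point that requires any thought is choosing the right substitution ($y_k^{2/3}$) to linearize the recursion so that the bound $y_k \gg k^{3/2}$ drops out cleanly from a telescoping argument. Everything else is a routine convergence test.
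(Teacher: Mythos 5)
Your proof is correct, but it takes a genuinely different route from the paper. You establish the explicit growth rate $y_k \gg k^{3/2}$ by applying the mean value theorem to $t\mapsto t^{2/3}$ and telescoping the resulting lower bound $y_{k+1}^{2/3}-y_k^{2/3}\ge \tfrac23\cdot 2^{-1/3}$, then compare the series to the $p$-series $\sum k^{-3\alpha/2}$. The paper instead uses a dyadic counting argument: it observes that since each interval $I_k=(y_k,y_{k+1}]$ has length $y_k^{1/3}\ge U^{1/3}$ when $y_k\ge U$, at most $U^{2/3}$ of the $y_k$ can fall in $[U,2U)$; summing the crude bound $y_k^{-\alpha}\le (2^j)^{-\alpha}$ over each dyadic block $[2^j,2^{j+1})$ yields a convergent geometric series precisely when $\alpha>2/3$. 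Your approach buys more information --- a sharp asymptotic for $y_k$ --- at the cost of the MVT step and the telescoping; the paper's approach is shorter and sidesteps any explicit solution of the recursion, exploiting only the spacing property that directly defines the $y_k$. Both proofs are complete and correct, and the threshold $\alpha>2/3$ emerges for the same underlying reason in each.
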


\begin{proof}
For any $U\ge1$, the number of integers $k$ such that $y_k\in[U,2U)$ is at most $U^{2/3}$, since the length of each corresponding interval $I_k$ is at least $U^{1/3}$. Therefore
\[
\sum_{k=1}^\infty y_k^{-\alpha} = \sum_{j=0}^\infty \sum_{k\colon y_k\in[2^j,2^{j+1})} y_k^{-\alpha} \le \sum_{j=0}^\infty (2^j)^{2/3} (2^j)^{-\alpha} = \frac1{1-2^{2/3-\alpha}},
\]
since we assumed $\alpha>\frac23$.
\end{proof}

Given $\epsilon>0$, we say that an interval $I(y)$ is $\epsilon$-good if
\[
     \bigg|\pi(I(y)) - \frac{y^{1/3}}{\log y} \bigg| \le \frac{\epsilon y^{1/3}}{\log y},
\]
and otherwise we say that $I(y)$ is $\epsilon$-bad. Selberg~\cite{Selb} showed
 that there exists a set $\mathcal S\subset \R_{\ge1}$ whose natural density equals $1$ for which
\begin{align}
\label{eq:sel}
    \pi(y+y^{1/3}) - \pi(y) \sim \frac{y^{1/3}}{\log y} \quad \text{for all } y\in \mathcal S.
\end{align}
This implies that for any $\epsilon>0$, the set of real numbers $y$ for which $I(y)$ is $\epsilon$-bad has density~$0$.  
(Selberg \cite{Selb} proved \eqref{eq:sel} where the exponent ``$1/3$" is permitted to be any constant in $(19/77,1]$.
Selberg's
theorem has been subsequently improved: from Huxley \cite{H}, one may take the exponent in \eqref{eq:sel}
as any number in $(1/6,1]$, 
see \cite[(1.3)]{K}.)

Our next lemma shows that $\epsilon$-bad intervals among the $I_k$ are also sparse.

\begin{lemma}\label{lemma:bad}
For each $\epsilon > 0$, the union of the $\epsilon$-bad intervals $I_k$ has natural density~$0$, and hence logarithmic density~$0$.
\end{lemma}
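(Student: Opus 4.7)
Since $I_k = I(y_k)$ by construction, ``$I_k$ is $\epsilon$-bad'' coincides with $y_k$ lying in the set $\mathcal T_\epsilon := \{y : I(y)\text{ is }\epsilon\text{-bad}\}$. The plan is to combine a stability argument with the Brun--Titchmarsh inequality, then invoke Selberg's theorem (which says $|\mathcal T_\epsilon \cap [1,Y]| = o(Y)$ for every $\epsilon>0$) to conclude that the natural density of $\mathcal B_\epsilon := \bigcup\{I_k : I_k \text{ is }\epsilon\text{-bad}\}$ is $0$; logarithmic density $0$ will then follow by partial summation.

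\emph{Step 1 (Stability via Brun--Titchmarsh).}  I would first show there is an absolute constant $c > 0$ and a threshold $y_0 = y_0(\epsilon)$ such that, for every $\epsilon$-bad $I_k$ with $y_k \ge y_0$, the subinterval $[y_k, y_k + c\epsilon y_k^{1/3}]$ lies entirely in $\mathcal T_{\epsilon/4}$. Writing $\pi(I(y)) - \pi(I_k) = b - a$ with $a = \pi((y_k, y])$ and $b = \pi((y_{k+1}, y + y^{1/3}])$, both count primes in intervals of length at most $c\epsilon y_k^{1/3}(1+o(1))$. By Brun--Titchmarsh each is bounded by a constant times $c\epsilon y_k^{1/3}/\log y_k$, since $\log(c\epsilon y_k^{1/3}) \ge \tfrac14 \log y_k$ for $y_k$ large; choosing $c$ sufficiently small makes $a + b \le (\epsilon/2)\, y_k^{1/3}/\log y_k$. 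Combined with $|\pi(I_k) - y_k^{1/3}/\log y_k| > \epsilon y_k^{1/3}/\log y_k$ and the slow variation of $y^{1/3}/\log y$ on this short subinterval, the reverse triangle inequality yields $|\pi(I(y)) - y^{1/3}/\log y| > (\epsilon/4)\, y^{1/3}/\log y$ throughout.

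\emph{Step 2 (Summing and finishing).}  The subintervals $[y_k, y_k + c\epsilon y_k^{1/3}]$ for different $\epsilon$-bad $I_k$ are pairwise disjoint (each lies inside a distinct $I_k$), so
\[
c\epsilon \sum_{\substack{I_k \text{ is }\epsilon\text{-bad} \\ y_k \le Y}} y_k^{1/3} \;\le\; \bigl|\mathcal T_{\epsilon/4} \cap [1, 2Y]\bigr|.
\]
By Selberg's theorem the right-hand side is $o(Y)$, so $|\mathcal B_\epsilon \cap [1,Y]| = o(Y/\epsilon)$, giving natural density $0$. Logarithmic density $0$ then follows by partial summation: with $M(t) := |\mathcal B_\epsilon \cap [1,t]| = o(t)$, we have $\int_{\mathcal B_\epsilon \cap [1,Y]} dt/t = M(Y)/Y + \int_1^Y M(t)/t^2\, dt = o(1) + o(\log Y)$.

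\emph{Main obstacle.}  The delicate point is Step 1. With only the trivial bound $a, b \le y - y_k + 1$, the stability subinterval would have length merely $\asymp \epsilon y_k^{1/3}/\log y_k$---a factor $\log y_k$ shorter than $|I_k|$---and Step 2 would give the useless estimate $|\mathcal B_\epsilon \cap [1,Y]| = o(Y\log Y)$. Brun--Titchmarsh saves precisely this logarithm, extending the stability subinterval to a constant fraction of $|I_k|$ and delivering natural density $0$.
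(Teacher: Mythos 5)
Your proposal is correct and follows essentially the same route as the paper's proof: spread each bad interval $I_k$ into a subinterval of comparable length (your $[y_k, y_k + c\epsilon y_k^{1/3}]$ versus the paper's $J_k = (y_k, y_k + \tfrac{\epsilon}{14}y_k^{1/3}]$), use the Brun--Titchmarsh/Montgomery--Vaughan bound to show $\pi(I(y))$ stays within $O(\epsilon)\,y_k^{1/3}/\log y_k$ of $\pi(I_k)$ throughout, conclude the subinterval consists of $\epsilon'$-bad points for a smaller $\epsilon'$, and invoke Selberg's theorem to finish by measure comparison and partial summation. The only differences are cosmetic choices of constants ($c\epsilon$ and $\epsilon/4$ where the paper uses $\epsilon/14$ and $\epsilon/3$).
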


\begin{proof}
For every $k\ge1$, define $J_k := (y_k, y_k + \frac\epsilon{14} y_k^{1/3}]$.
Suppose that $k\ge1$ is chosen so that $I_k$ is an $\epsilon$-bad interval. Note that for all $y \in J_k$, the intervals $I(y)$ and $I_k = I(y_k)$ have nearly the same number of primes; more precisely,
\begin{equation}  \label{lil pi difference}
\pi(I(y)) - \pi(I_k) = \pi\big((y_{k+1},y + y^{1/3}]\big) - \pi\big((y_k,y]\big),
\end{equation}
since the primes in the larger interval $(y,y_{k+1}]$ cancel in the difference.
By Titchmarsh's inequality~\cite[equation~(1.12)]{MV}, we have $\pi(I) \le 2h/\log h$ for all intervals $I$ of length $h>1$;
and since $2h/\log h$ is an increasing function of $h$ for $h>e$, we deduce that for any interval $I$ of length at most $\tfrac\epsilon{13} y_k^{1/3}$,
\[
\pi(I) \le \frac{2\cdot \tfrac\epsilon{13} y_k^{1/3}}{\log(\tfrac\epsilon{13} y_k^{1/3})} < \frac\epsilon2 \frac{y_k^{1/3}}{\log y_k}
\]
when $k$ is sufficiently large in terms of~$\epsilon$. (This deduction assumed that the length of $I$ exceeds $e$, but the final inequality is trivial for large $k$ when the length of $I$ is at most~$e$.)
In particular, both intervals on the right-hand side of equation~\eqref{lil pi difference} have length at most $\tfrac\epsilon{13} y_k^{1/3}$ when $k$ is sufficiently large, from which we see that $\big| \pi(I(y)) - \pi(I_k) \big| \le \frac\epsilon2 {y_k^{1/3}}/\log y_k$. Consequently, since $I_k$ is $\epsilon$-bad,
we conclude that
\begin{align*}
     \bigg|\pi(I(y)) - \frac{y^{1/3}}{\log y} \bigg| &\ge \bigg|\pi(I_k) - \frac{y_k^{1/3}}{\log y_k} \bigg| - \big|\pi(I(y)) - \pi(I_k) \big| - \bigg| \frac{y^{1/3}}{\log y} - \frac{y_k^{1/3}}{\log y_k} \bigg| \\
     &\ge \frac{\epsilon y_k^{1/3}}{\log y_k} - \frac\epsilon2 \frac{y_k^{1/3}}{\log y_k} + o(1) > \frac\epsilon3 \frac{y^{1/3}}{\log y}
\end{align*}
when $k$ is sufficiently large (where the mean value theorem was used in the middle inequality).
In other words, we have shown that $I_k$ being $\epsilon$-bad implies that $I(y)$ is $\frac\epsilon3$-bad for all $y\in J_k$.

Let $J$ be the (disjoint) union of all the intervals $J_k$, where $k$ ranges over those positive integers for which $I_k$ is $\epsilon$-bad.
By the result of Selberg described above, the set of $\frac\epsilon3$-bad real numbers (which contains $J$) has density~$0$, so $J\cap[1,x]$ has measure~$o(x)$. But this measure is at least $\frac\epsilon{14}$ times the measure of the union of all $\epsilon$-bad intervals~$I_k$; hence, the union of these intervals below $x$ also has measure~$o(x)$, which completes the proof.
\end{proof}

\begin{proof}[Proof of Theorem~\ref{thm:density}]
For the sake of simplicity, we abbreviate $\mathcal M_a(f)$ to $\mathcal M_a$ during this proof.
Let $\epsilon$ and $\eta$ be positive parameters, and let $\mathcal B_\epsilon$ denote the union of all $\epsilon$-bad intervals of the form $I_k$, so that $\mathcal B_\epsilon$ has logarithmic density $0$ by Lemma~\ref{lemma:bad}.

Suppose that $I(y)$ is any $\epsilon$-good interval. Since $\int_{I(y)} dt/t = \int_y^{y+y^{1/3}} dt/t = \log(1+y^{-2/3}) = y^{-2/3}+O(y^{-4/3})$, we see that
\begin{equation} \label{good loggy bound}
    \sum_{p\in I(y)}\frac{\log p}{p} \le \frac{\log y}{y}\pi(I(y)) \le (1+\epsilon)y^{-2/3} = (1+\epsilon)\int_{I(y)} \frac{dt}{t} + O(y^{-4/3}),
\end{equation}
where the second inequality used the $\epsilon$-goodness of~$I(y)$. On the other hand, even if $I(y)$ is an $\epsilon$-bad interval, Titchmarsh's inequality still yields
\begin{equation} \label{bad loggy bound}
    \sum_{p\in I(y)}\frac{\log p}{p} \le \frac{\log y}{y}\pi(I(y)) \ll \frac{\log y}{y} \frac{y^{1/3}}{\log(y^{1/3})} \ll y^{-2/3} \ll \int_{I(y)} \frac{dt}{t}.
\end{equation}

By condition (a) of Theorem \ref{thm:density}, there exists a positive integer $C$ (depending on $a$ and~$\eta$) such that if $p$ is a prime in an interval $I_k$ with $k>C$, then the inequality $f(p)>a$ implies that $f(z) > a-\eta$ for all $z\in I_k$. In particular, every $I_k$ containing a prime $p$ with $f(p)>a$ is either a subset of $\mathcal B_\epsilon$ or else is an $\epsilon$-good interval contained in $\mathcal M_{a-\eta}$, so that
\begin{align*}
    \sum_{\substack{p\le x\\f(p)>a}}\frac{\log p}{p}\le \sum_{\substack{I_k\subset \mathcal M_{a-\eta}\cap[1,x] \\ I_k \text{ is $\epsilon$-good}}}\sum_{p\in I_k}\frac{\log p}{p} + \sum_{I_k\subset \mathcal B_\epsilon\cap[1,x]}\sum_{p\in I_k}\frac{\log p}{p}.
\end{align*}
Using equation~\eqref{good loggy bound} for the terms in the first sum and equation~\eqref{bad loggy bound} for the second sum, we obtain the upper bound
\begin{align*}
    \sum_{\substack{p\le x\\f(p)>a}}\frac{\log p}{p} &\le (1+\epsilon)\int_{\mathcal M_{a-\eta}\cap[1,x]} \frac{dt}{t} + O\bigg(\sum_{y_k\le x}y_k^{-4/3}\bigg) + O\bigg( \int_{\mathcal B_\epsilon\cap[1,x]} \frac{dt}{t} \bigg) \\
    & \le (1+\epsilon)\int_{\mathcal M_{a-\eta}\cap[1,x]} \frac{dt}{t} + O(1) + o(\log x)
\end{align*}
by Lemma~\ref{yk sum lemma} and the fact that $\mathcal B_\epsilon$ has logarithmic density~$0$.

Therefore we have
\begin{align}\label{eq:over}
 \overline{\delta}^*(\mathcal M_a) &= \limsup_{x\to\infty}\frac{1}{\log x}\sum_{\substack{p\le x\\f(p)>a}}\frac{\log p}{p} \\
 &\le \limsup_{x\to\infty} \bigg( \frac{(1+\epsilon)}{\log x}\int_{\mathcal M_{a-\eta}\cap[1,x]} \frac{dt}{t} + o(1) \bigg) = (1+\epsilon)\delta(\mathcal M_{a-\eta})
\end{align}
since $\delta(\mathcal M_{a-\eta})$ exists by condition~(b) of Theorem \ref{thm:density}.

Similarly, the primes in $\mathcal M_a$ that are contained in $\epsilon$-good intervals $I_k\subset \mathcal M_a$ form a subset of all primes in $\mathcal M_a$. Then for a lower bound, it suffices to consider the $\epsilon$-good intervals in $\mathcal M_a$, which by a simple computation gives the bound $\underline{\delta}^*(\mathcal M_a) \ge (1-\epsilon)\delta(\mathcal M_a)$.

Since these bounds hold for all $\epsilon>0$, we see that
\begin{align}
    \delta(\mathcal M_a) \le \underline{\delta}^*(\mathcal M_a) \le \overline{\delta}^*(\mathcal M_a) \le \delta(\mathcal M_{a-\eta}).
\end{align}
Finally, by condition (b) the map $\eta\mapsto \delta(\mathcal M_{a-\eta})$ is continuous, so since $\eta>0$ was arbitrary we conclude that $\underline{\delta}^*(\mathcal M_a) = \overline{\delta}^*(\mathcal M_a)= \delta(\mathcal M_a)$ as desired.
\end{proof}

\section{The Mertens race}
\label{sec:mertens}

In 1874, Mertens proved three remarkable and related
 results on the distribution of prime numbers. His third theorem asserts that  
\begin{align*}
    \prod_{p < x}\Big(1 - \frac{1}{p}\Big)^{-1} \sim e^\gamma \log x \quad\textnormal{as }x\to\infty,
\end{align*}
where $\gamma$ is the Euler--Mascheroni constant. The ``Mertens race'' between $e^\gamma \log x$ and this product of Mertens is mathematically analagous to the race between $\li(x)$ and $\pi(x)$. Recent analysis of Lamzouri \cite{Lamz} implies, conditionally on RH and LI, that the normalized error function
\begin{align*}
    E_M(x) = \bigg( \log \prod_{p < x}\Big(1 - \frac{1}{p}\Big)^{-1} - \log\log x -\gamma \bigg) \sqrt{x}\log x
\end{align*}
possesses the exact same limiting distribution as that of
\begin{align*}
    -E_{\pi}(x) = \frac{\log x}{\sqrt{x}}\Big(\li(x) - \pi(x) \Big)
\end{align*}
that appeared in the proof of Theorem~\ref{thm:pivsli}.
 We say a prime $p$ is \textbf{Mertens} if $E_M(p) > 0$. It can be checked that the first $10^8$ odd primes are Mertens. The first and third authors have shown~\cite[Theorem 1.3]{LP}, assuming RH and LI, that the lower relative logarithmic density of the Mertens primes exceeds $.995$.  Applying Theorem \ref{thm:density} to the Mertens race by choosing $f(x) = E_M(x)$ leads immediately to the following improvement.

\begin{theorem}\label{thm:mertens}
Assuming RH and LI, the Mertens primes have relative logarithmic density $1-\Delta$, where $\Delta$ was defined in equation~\eqref{Delta defn}.
\end{theorem}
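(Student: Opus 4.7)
The plan is to apply Theorem \ref{thm:density} to the function $f = E_M$ at threshold $a = 0$, since the Mertens primes are precisely the primes lying in $\mathcal M_0(E_M)$. Once conditions (a) and (b) of the theorem are verified for $E_M$, Theorem \ref{thm:density} yields $\delta^*(\mathcal M_0(E_M)) = \delta(\mathcal M_0(E_M))$, and the Lamzouri identification of the limiting distribution of $E_M$ will pin this common value down as $1 - \Delta$.

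Condition (b) follows almost directly from the material quoted before the theorem. Lamzouri's theorem gives, under RH and LI, that $E_M$ shares its limiting logarithmic distribution with $-E_\pi$, which Rubinstein--Sarnak showed (as recalled in the proof of Theorem \ref{thm:pivsli}) is continuous. Hence $\delta(\mathcal M_a(E_M))$ exists for every $a \in \R$ and depends continuously on $a$. Evaluating at $a = 0$ and using that ties have density zero gives
\[
\delta(\mathcal M_0(E_M)) = \delta(\{x \colon -E_\pi(x) > 0\}) = \delta(\{x \colon E_\pi(x) < 0\}) = 1 - \delta(\Pi) = 1 - \Delta.
\]

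The main step is condition (a). I would write $E_M(z) = M(z)\sqrt{z}\log z$, where $M(z) := \log\prod_{p < z}(1 - 1/p)^{-1} - \log\log z - \gamma$, and estimate the increment on an interval $[x, x + x^{1/3}]$. Titchmarsh's inequality bounds the number of primes in such an interval by $O(x^{1/3}/\log x)$, each contributing $-\log(1 - 1/p) = O(1/x)$ to $M$, while the change in $\log\log z$ is controlled by the mean value theorem; together these yield $M(z) - M(x) \ll 1/(x^{2/3}\log x)$. Combined with the multiplicative comparison $\sqrt{z}\log z = \sqrt{x}\log x \cdot (1 + O(x^{-2/3}))$, this gives
\[
E_M(z) - E_M(x) = O(x^{-1/6}) + O\bigl(\abs{E_M(x)} \cdot x^{-2/3}\bigr).
\]
The hard part is that the second error term is multiplicative in $E_M(x)$, which is a priori unbounded as a function of $x$; but RH provides $\abs{E_M(x)} \ll \log^2 x$ (from the standard bound $M(x) \ll \log x/\sqrt{x}$ coming from the RH error term for $\sum_{p \le x} 1/p$), so the difference is $o(1)$ uniformly. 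Therefore $E_M(x) > a$ and $x$ sufficiently large force $E_M(z) > b$ for every $z \in [x, x + x^{1/3}]$, and the symmetric estimate handles $-E_M$. Invoking Theorem \ref{thm:density} then completes the proof.
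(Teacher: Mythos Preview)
Your proof is correct and follows essentially the same approach as the paper: verify conditions (a) and (b) of Theorem~\ref{thm:density} for $E_M$ and invoke Lamzouri's identification of the limiting distribution with that of $-E_\pi$. One simplification: your appeal to RH for the bound $|E_M(x)|\ll\log^2 x$ is unnecessary, since Mertens' theorem already gives $M(x)=O(1)$ unconditionally, whence your term $|E_M(x)|\cdot x^{-2/3}=|M(x)|\sqrt{x}\log x\cdot x^{-2/3}=O(x^{-1/6}\log x)$; the paper obtains $|E_M(z)-E_M(x)|\ll(\log x)/x^{1/6}$ directly in this way.
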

\begin{proof}
Consider any number $z\in[x,x+x^{1/3}]$. We have $\log\log z - \log \log x \ll 1/(x\log x)$ by the mean value theorem and 
\begin{align*}
    \sum_{x \le p < z}\log\Big(1 - \frac{1}{p}\Big)^{-1} \ll \sum_{x \le p < z} \frac1p \le \sum_{x\le n< x+x^{1/3}} \frac1n < x^{-2/3},
\end{align*}
and so $|E_M(z)-E_M(x)| \ll (\log x)/x^{1/6}$. The fact that this upper bound tends to $0$ easily implies condition~(a) of Theorem \ref{thm:density}. Finally, condition (b) is satisfied by a similar argument as in the proof of Theorem~\ref{thm:pivsli}, using work of Lamzouri~\cite{Lamz} on the limiting logarithmic distribution of $E_M(x)$.
\end{proof}

Theorem \ref{thm:mertens} has an interesting application to the Erd\H os conjecture for primitive sets.
A subset of the integers larger than $1$ is {\bf primitive} if no member divides another. Erd\H os \cite{E35} proved in 1935 that the sum of $1/(a\log a)$ for $a$ running over a primitive set $A$
is universally bounded over all choices for $A$. Some years later in a 1988 seminar in Limoges, he asked if this universal bound is
attained for the set of prime numbers.
If we define $f(a)=1/(a\log a)$ and $f(A)=\sum_{a\in A}f(a)$, and let
$\PP(A)$ denote the set of primes that divide some member of~$A$, then this conjecture is seen
to be equivalent to the following assertion.

\begin{conjecture}[Erd\H os]\label{conj:Erdos}
For any primitive set $A$, we have
$f(A)  \le f(\PP(A))$.
\end{conjecture}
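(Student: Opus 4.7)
The plan is to follow and strengthen the per-prime strategy of~\cite{LP}, using Theorem~\ref{thm:mertens} to handle the generic case. For a primitive set $A$, partition $A=\bigsqcup_{p\in\PP(A)} A_p$ by least prime factor, writing $A_p=\{a\in A : p(a)=p\}$ where $p(a)$ denotes the least prime factor of $a$. Then $f(A)=\sum_{p\in\PP(A)} f(A_p)$, so the conjecture $f(A)\le f(\PP(A))$ would follow at once from the per-prime bound $f(A_p)\le f(p)$ for every $p\in\PP(A)$.

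To attack this per-prime bound, I would write each $a\in A_p$ as $a=pm$ with $m$ supported on primes $\ge p$ (or $m=1$), and observe that $M_p:=\{a/p:a\in A_p\}$ is itself primitive. A short Mertens-style calculation---of the kind carried out in~\cite{LP}---bounds the resulting inner sum $\sum_{m\in M_p} 1/(m(\log p+\log m))$ in terms of the partial Mertens product $\prod_{q<p}(1-1/q)^{-1}$, and in particular yields $f(A_p)\le f(p)$ whenever $p$ is a Mertens prime. Theorem~\ref{thm:mertens} then asserts that the Mertens primes carry relative logarithmic density $1-\Delta$, so the per-prime bound holds for all but a relatively sparse subset of $\PP(A)$, and one recovers the conjecture up to a small excess from the non-Mertens primes.

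The main obstacle, and the reason the conjecture remains open, is controlling the contribution of the remaining non-Mertens primes in $\PP(A)$. The per-prime bound can genuinely fail at such primes, and $A$ is chosen adversarially: although the relative logarithmic density of non-Mertens primes is only $\Delta\approx 2.6\times 10^{-7}$, nothing prevents $A$ from concentrating its mass precisely on those primes, and a density statement does not by itself control the weighted sum $\sum_p f(A_p)$. Closing the gap would seem to require either an absolute bound on the excess $\max(0,\,f(A_p)-f(p))$ that is summable over all primes regardless of where $\PP(A)$ lies, or a more flexible assignment of each $a\in A$ to a prime divisor---not necessarily the least one---that redistributes the mass off the non-Mertens primes while preserving the telescoping structure of the argument.
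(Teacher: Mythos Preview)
This statement is recorded in the paper as a \emph{conjecture}, not a theorem; the paper does not prove it and explicitly states that ``the Erd\H os conjecture remains open.'' So there is no proof in the paper to compare your proposal against.

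Your proposal is not a proof either, and you are candid about this: you outline the per-prime decomposition $A=\bigsqcup_p A_p$ and the Mertens-prime criterion from~\cite{LP}, then correctly identify the obstruction---nothing prevents an adversarial $A$ from concentrating on non-Mertens primes, and the density statement of Theorem~\ref{thm:mertens} does not control the weighted sum $\sum_p f(A_p)$. That diagnosis is accurate and matches the paper's own discussion: the paper proves only that every Mertens prime is Erd\H os-strong, deduces the conjecture for primitive sets whose least prime factors are all Mertens, and notes that even showing $2$ is Erd\H os-strong would be significant progress. Your two suggested attack lines (a summable bound on the excess $f(A_p)-f(p)$, or a more flexible prime-assignment scheme) are reasonable speculations but are not carried out, so the proposal remains a strategy sketch rather than a proof.
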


The Erd\H{o}s conjecture remains open, but progress has been made in certain cases. Say a
prime $p$ is {\bf Erd\H os-strong} if $f(A)\le f(p)$ for any primitive set $A$ such that each
member of $A$ has $p$ as its least prime factor.  By partitioning the elements of $A$ into sets $A'_p$ by their smallest prime factor $p$, it is clear that the Erd\H{o}s conjecture would follow if every
prime $p$ is Erd\H os-strong.    The first and third authors~\cite[Corollary 3.0.1]{LP}
proved that every Mertens prime is Erd\H{o}s-strong. 
In particular, the Erd\H{o}s conjecture holds for any primitive set $A$ such that, for all $a\in A$, the smallest prime factor of $a$ is Mertens.

In~\cite{LP} it was conjectured that all primes are Erd\H{o}s-strong. Since $2$ is not a Mertens prime, it would be great progress just to be able to prove that $2$ is Erd\H{o}s-strong. Nevertheless, Theorem \ref{thm:mertens} implies the following corollary.

\begin{corollary}
Assuming RH and LI, the lower relative logarithmic density of the Erd\H{o}s-strong primes is at least $1-\Delta$. In particular, the Erd\H os conjecture holds for all primitive sets whose elements have smallest prime factors in a set of primes of lower relative logarithmic density at least $1-\Delta$.
\end{corollary}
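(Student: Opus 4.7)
The plan is to chain together two previously established results: Theorem~\ref{thm:mertens}, which computes the relative logarithmic density of the Mertens primes to be exactly $1-\Delta$, together with the result of \cite[Corollary 3.0.1]{LP} cited in the paragraph above, that every Mertens prime is Erd\H{o}s-strong. Since containment of sets of primes implies the analogous inequality for lower relative logarithmic densities, the first assertion of the corollary will follow immediately: the set of Erd\H{o}s-strong primes contains the set of Mertens primes, and so it has lower relative logarithmic density at least $1-\Delta$.

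For the ``in particular'' clause, I would interpret the hypothesis as referring to the set $E$ of Erd\H{o}s-strong primes itself (which, as just noted, has lower relative logarithmic density at least $1-\Delta$). Given a primitive set $A$ all of whose elements have smallest prime factor in $E$, I would invoke the partition-by-least-prime-factor argument outlined just after the statement of Conjecture~\ref{conj:Erdos}. Write $A$ as a disjoint union $\bigcup_p A'_p$, where $A'_p$ collects those $a\in A$ whose least prime factor is $p$. Each $A'_p$ is a primitive set (being a subset of $A$), every member of which has $p$ as its least prime factor; for each $p\in E$ with $A'_p\ne\emptyset$, the definition of Erd\H{o}s-strong then yields $f(A'_p)\le f(p)$. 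Summing over $p$, and observing that each such $p$ divides some element of $A$ and hence lies in $\PP(A)$, gives
$$
f(A) = \sum_p f(A'_p) \;\le\; \sum_p f(p) \;\le\; f(\PP(A)),
$$
which is the Erd\H{o}s conjecture for~$A$.

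There is no real obstacle here: the corollary is essentially a repackaging of Theorem~\ref{thm:mertens} via the previously established implication ``Mertens $\Rightarrow$ Erd\H{o}s-strong.'' The only delicate point is to read the hypothesis of the second sentence charitably---the ``set of primes of lower relative logarithmic density at least $1-\Delta$'' should be understood as (a subset of) the Erd\H{o}s-strong primes themselves, rather than an arbitrary set of primes of that density, since the latter interpretation would require substantial additional input.
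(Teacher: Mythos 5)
Your proof is correct and matches the paper's (implicit) argument: the corollary is stated without a displayed proof, and the intended chain is exactly what you give—Theorem~\ref{thm:mertens} shows the Mertens primes have relative logarithmic density $1-\Delta$, \cite[Corollary 3.0.1]{LP} shows Mertens $\Rightarrow$ Erd\H{o}s-strong, so the Erd\H{o}s-strong primes have lower relative logarithmic density at least $1-\Delta$, and the partition-by-least-prime-factor argument (already sketched after Conjecture~\ref{conj:Erdos}) finishes the second sentence. Your charitable reading of the ``in particular'' clause is the intended one.
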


\section{The Zhang race}


By the prime number theorem, one has the asymptotic relation
\begin{align*}
 \sum_{p\ge x}\frac{1}{p\log p} \sim     \frac{1}{\log x} 
\end{align*}
as $x\to\infty$,
and by inspection one further has
\begin{align}\label{eq:Zhang}
 \sum_{p\ge x}\frac{1}{p\log p}\le    \frac{1}{\log x}
\end{align}
for a large range of $x$. 
Beyond its aesthetic appeal, this inequality arises quite naturally in the study of primitive sets. Indeed, Z.~Zhang~\cite{Z} used a weakened version of \eqref{eq:Zhang} to prove Conjecture~\ref{conj:Erdos} for all primitive sets whose elements have at most 4 prime factors, which represented the first significant progress in the literature after \cite{E35}.

Call a prime $q$ {\bf Zhang} if the inequality~\eqref{eq:Zhang} holds for $x=q$. From computations in \cite{LP}, the first $10^8$ primes are all Zhang except for $q=2,3$. Following some ideas of earlier work of Erd\H os and Zhang \cite{EZ}, the first and third authors have shown~\cite[Theorem 5.1]{LP} that Conjecture~\ref{conj:Erdos}
holds for any primitive set
$A$ such that every member of $\PP(A)$ is Zhang.

 We wish to find the density of $\mathcal N$, the set of real numbers for which the Zhang inequality~\eqref{eq:Zhang} holds. Note that $x\in\mathcal N$ if and only if the normalized error
\begin{align} \label{EZ defn}
    E_Z(x) := \bigg(\frac{1}{\log x} - \sum_{p\ge x}\frac{1}{p\log p}\bigg)\sqrt{x}\log^2 x
\end{align}
is nonnegative. To show the density of $\mathcal N$ exists we follow the general plan laid out by  Lamzouri \cite{Lamz}, who proved analogous results for the Mertens race, with some important modifications.

\subsection{Explicit formula for $E_Z(x)$}

First we relate the sum over primes, $\sum_{p\ge x} 1/(p\log p)$, to the corresponding series over prime powers, $\sum_{n\ge x} \Lambda(n)/(n\log^2 n)$, in the following lemma.

\begin{lemma} \label{lem:Ep}
For all $x>1$,
\[
    E_Z(x) = \bigg(\frac{1}{\log x} - \sum_{n\ge x}\frac{\Lambda(n)}{n\log^2 n}\bigg)\sqrt{x}\log^2 x + 1 + O\Big(\frac{1}{\log x}\Big).
\]
\end{lemma}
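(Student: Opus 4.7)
The plan is to reduce the claim to estimating the contribution from proper prime powers (i.e., $n=p^k$ with $k\ge 2$) to the sum $\sum_{n\ge x}\Lambda(n)/(n\log^2 n)$, and to show that the $k=2$ term produces the main piece $1+O(1/\log x)$ while the $k\ge 3$ tail is negligible.

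First I would unfold the definition~\eqref{EZ defn} of $E_Z(x)$ to see that the asserted identity is equivalent to
$$
\bigg(\sum_{n\ge x}\frac{\Lambda(n)}{n\log^2 n} - \sum_{p\ge x}\frac{1}{p\log p}\bigg)\sqrt{x}\log^2 x \;=\; 1 + O\!\left(\frac{1}{\log x}\right).
$$
Since $\Lambda(p)/(p\log^2 p)=1/(p\log p)$ for every prime $p$, the prime-indexed contributions cancel exactly. Writing each remaining $n$ as $p^k$ with $k\ge 2$ (so that $n\ge x$ becomes $p\ge x^{1/k}$), the difference of sums reduces to
$$
\sum_{k\ge 2}\frac{1}{k^2}\sum_{p\ge x^{1/k}}\frac{1}{p^k\log p}.
$$

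Next I would isolate the $k=2$ term and evaluate it by partial summation, using the prime number theorem in the form $\pi(t)=t/\log t+O(t/\log^2 t)$, to obtain
$$
\sum_{p\ge\sqrt{x}}\frac{1}{p^2\log p} \;=\; \int_{\sqrt{x}}^{\infty}\frac{dt}{t^2\log^2 t} + O\!\left(\frac{1}{\sqrt{x}\,\log^3 x}\right).
$$
A single integration by parts gives $\int_{y}^{\infty}dt/(t^2\log^2 t)=\frac{1}{y\log^2 y}(1+O(1/\log y))$, so setting $y=\sqrt{x}$ shows the $k=2$ contribution to be $\frac{1}{\sqrt{x}\log^2 x}(1+O(1/\log x))$. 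Multiplying by $\sqrt{x}\log^2 x$ produces precisely the main term $1+O(1/\log x)$ in the target identity.

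For $k\ge 3$ I would use the crude bound $\sum_{p\ge y}p^{-k}\le\sum_{n\ge y}n^{-k}\ll y^{1-k}$, which gives
$$
\sum_{k\ge 3}\frac{1}{k^2}\sum_{p\ge x^{1/k}}\frac{1}{p^k\log p} \;\ll\; \frac{1}{\log x}\sum_{k\ge 3}\frac{1}{k(k-1)\,x^{(k-1)/k}} \;\ll\; \frac{1}{x^{2/3}\log x},
$$
contributing $O(x^{-1/6}\log x)=O(1/\log x)$ after the final factor of $\sqrt{x}\log^2 x$. The only real obstacle is making sure the partial-summation bookkeeping for $k=2$ is carried out with an error of relative size $O(1/\log x)$; the PNT with error term $O(t/\log^2 t)$ handles this cleanly, and nothing else is nonroutine.
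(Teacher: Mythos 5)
Your proof follows essentially the same route as the paper's: cancel the prime-indexed terms, extract the $k=2$ proper-prime-power contribution via partial summation with the PNT in the form $\pi(t)=t/\log t+O(t/\log^2 t)$, and show the $k\ge 3$ tail is $O(x^{-2/3})$ uniformly. The paper phrases the $k=2$ computation as a direct asymptotic $\sum_{p\ge y}1/(p^2\log p)=1/(y\log^2 y)+O(1/(y\log^3 y))$ rather than first passing through $\int_y^\infty dt/(t^2\log^2 t)$, but this is a presentational difference only and both give the same main term after the factor-of-$4$ cancellation between $1/k^2$ and $\log^2\sqrt{x}=\tfrac14\log^2 x$.
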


\begin{proof}
Our first step is to convert the sum over primes to prime powers, via
\begin{align} \label{here's Lambda}
    \sum_{p\ge x}\frac{1}{p\log p} & = \sum_{n\ge x}\frac{\Lambda(n)}{n\log^2 n} - \sum_{\substack{p^k> x\\ k\ge2}}\frac{1}{k^2 p^k \log p}.
\end{align}
The prime number theorem gives that $\pi(y) = y/\log y + O(y/\log^2 y)$, so for any $y\ge2$,
\begin{align*}
\sum_{p\ge y}\frac{1}{p^2\log p} &
= -\frac{\pi(y)}{y^2\log y} + \int_{y}^\infty\frac{2\log t + 1}{t^3\log^2 t}\pi(t)\;dt\\
& = -\frac{1}{y\log^2 y} + O\bigg(\frac{1}{y\log^3 y} \bigg) + \int_{y}^\infty\frac{2\log t + 1}{t^2\log^3 t} \bigg( 1 + O\bigg(\frac1{\log t} \bigg)\bigg) \;dt \\
& = -\frac{1}{y\log^2 y} + \frac{2}{y\log^2 y} + O\bigg(\frac{1}{y\log^3 y}\bigg) = \frac{1}{y\log^2 y} + O\bigg(\frac{1}{y\log^3 y}\bigg).
\end{align*}
In particular, taking $y=\sqrt x$,
\begin{align} \label{squares}
    \sum_{p^2>x}\frac{1}{4p^2\log p} & = \frac{1}{\sqrt{x}\log^2 x} + O\bigg(\frac{1}{\sqrt{x}\log^3 x}\bigg).
\end{align}
For the larger powers of primes, we have
\begin{align*}
    \sum_{p^k> x}\frac{1}{p^k\log p} < \sum_{n> x^{1/k}} \frac1{n^k} < \frac1{\lceil x^{1/k} \rceil^k} + \int_{x^{1/k}}^\infty \frac{dt}{t^k} \le \frac1x + \frac1{(k-1)x^{1-1/k}} \ll x^{-2/3}
\end{align*}
uniformly for $k\ge3$, and thus
\begin{align} \label{higher powers}
    \sum_{k\ge3}\sum_{p^k> x}\frac{1}{k^2p^k\log p} \ll \sum_{k\ge3}\frac{x^{-2/3}}{k^2} \ll x^{-2/3}.
\end{align}
Inserting the estimates~\eqref{squares} and~\eqref{higher powers} into equation~\eqref{here's Lambda} then yields
\begin{align*}
     \sum_{p\ge x}\frac{1}{p\log p} & = \sum_{n\ge x}\frac{\Lambda(n)}{n\log^2 n} - \frac{1}{\sqrt{x}\log^2 x} + O\bigg(\frac{1}{\sqrt{x}\log^3 x}\bigg),
\end{align*}
which implies the statement of the lemma.
\end{proof}

By integrating twice, we relate our series $\sum \Lambda(n)/n\log^2 n$ to the series $\sum \Lambda(n)/n^a = -\zeta'/\zeta(a)$, which is more amenable to contour integration. This leads to the following explicit formula for $E_Z(x)$ over the zeros of $\zeta(s)$, analogous to \cite[Proposition 2.1]{Lamz}.

\begin{proposition}\label{prop:Ex}
Unconditionally, for any real numbers $x,T\ge 5$,
\begin{align*}
    E_Z(x) & = 1 - \sum_{|\Im(\rho)|< T} \frac{x^{\rho-1/2}}{\rho-1} + O\bigg(\frac{1}{\log x} + \frac{\sqrt{x}}{T}\log^2(xT) + \frac{1}{\log x}\sum_{|\Im(\rho)|< T}\frac{x^{\Re(\rho)-1/2}}{\Im(\rho)^2}\bigg),
\end{align*}
where $\rho$ runs over the nontrivial zeros of $\zeta(s)$.
\end{proposition}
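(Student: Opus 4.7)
The plan is to combine Lemma~\ref{lem:Ep} with the truncated explicit formula for $\psi(t)=\sum_{n\le t}\Lambda(n)$. By that lemma it suffices to analyze $g(x):=\sum_{n\ge x}\Lambda(n)/(n\log^2 n)$. I would begin with Abel summation to write
$g(x) \;=\; -\frac{\psi(x)}{x\log^2 x}+\int_x^\infty \psi(t)\Bigl(\frac{1}{t^2\log^2 t}+\frac{2}{t^2\log^3 t}\Bigr)\,dt,$
and then substitute $\psi(t)=t-\sum_{|\Im(\rho)|<T}t^\rho/\rho+R(t,T)$, where $R(t,T)\ll t\log^2(tT)/T+\log t$ unconditionally. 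This splits $g(x)$ into three pieces: a main term from $t$, a finite sum over zeros, and the residual error coming from $R$.

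The $\psi(t)\approx t$ piece integrates exactly to $1/\log x$; after scaling by $\sqrt{x}\log^2 x$ this cancels the $1/\log x$ in Lemma~\ref{lem:Ep} and leaves the ``$+1$'' of the proposition. For each zero $\rho$, combining the boundary contribution $x^{\rho-1}/(\rho\log^2 x)$ with the two integrals $\int_x^\infty t^{\rho-2}/(\rho \log^j t)\,dt$ for $j=2,3$, applying integration by parts once to transfer the $\log^2$ weight to $\log^3$, I expect a clean zero-by-zero contribution to $g(x)$ of
$\frac{x^{\rho-1}}{(\rho-1)\log^2 x}+O\Bigl(\frac{x^{\Re(\rho)-1}}{|\rho-1|^2\log^3 x}\Bigr),$
where the algebra collapses because $\tfrac{1}{\rho}+\tfrac{1}{\rho(\rho-1)}=\tfrac{1}{\rho-1}$. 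Multiplying by $-\sqrt{x}\log^2 x$ and summing then produces exactly the principal term $-\sum_{|\Im(\rho)|<T}x^{\rho-1/2}/(\rho-1)$, and the accompanying error matches the $\frac{1}{\log x}\sum_{|\Im(\rho)|<T}x^{\Re(\rho)-1/2}/\Im(\rho)^2$ in the statement (using $|\rho-1|\asymp|\Im(\rho)|$ for $|\Im(\rho)|\ge 1$).

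The boundary contribution $R(x,T)/(x\log^2 x)$, scaled by $\sqrt{x}\log^2 x$, produces precisely the $\sqrt{x}\log^2(xT)/T$ error in the statement. The main obstacle is the integrated error $\int_x^\infty R(t,T)\bigl(\frac{1}{t^2\log^2 t}+\frac{2}{t^2\log^3 t}\bigr)dt$, since the naive bound $R(t,T)\ll t\log^2(tT)/T$ yields a divergent integral. My proposal is to split the range at some threshold $Y=Y(x,T)$: on $[x,Y]$ apply the truncated explicit formula directly, and on $[Y,\infty)$ use the unconditional prime number theorem remainder $\psi(t)-t\ll t\exp(-c\sqrt{\log t})$, absorbing into the total error both the tail of the zero sum on $[Y,\infty)$ and the ensuing PNT remainder. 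Choosing $Y$ large enough that the PNT tail is negligible, yet small enough that $\int_x^Y \log^2(tT)/(Tt\log^2 t)\,dt$ stays within $O(\log^2(xT)/(T\log^2 x))$ after scaling by $\sqrt{x}\log^2 x$, should recover the claimed bound. This splitting-and-rebalancing is the most delicate step of the proof; the remaining manipulations are standard.
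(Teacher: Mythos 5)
Your route is genuinely different from the paper's. The paper never uses partial summation against $\psi(t)$: instead it starts from Lamzouri's truncated explicit formula for the \emph{Dirichlet series} $\sum_{n<x}\Lambda(n)/n^a$ with $a>1$ (\cite[Lemma 2.4]{Lamz}), integrates twice in the parameter ($a$ then $b$) to manufacture the two factors of $\log n$ in the denominator, and evaluates the resulting double integrals $\int_1^\infty\!\int_b^\infty x^{\rho-a}/(\rho-a)\,da\,db$ by integration by parts, one zero at a time. Your zero-by-zero computation is a nice parallel to the paper's: the identity $\tfrac1\rho+\tfrac1{\rho(\rho-1)}=\tfrac1{\rho-1}$ is correct and does collapse the algebra, and with \emph{two} integrations by parts (not one, as you wrote) the per-zero error is indeed $\ll x^{\Re\rho-1}/(|\rho-1|^2\log^3 x)$, matching the proposition after scaling.

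However, there is a genuine gap in your treatment of the residual error $R(t,T)$, which you yourself flag as the delicate step. The naive bound $R(t,T)\ll t\log^2(tT)/T$ makes $\int_x^\infty R(t,T)\,dt/(t^2\log^2 t)$ diverge (the $\tfrac1T\int_x^\infty dt/t$ and $\tfrac{2\log T}{T}\int_x^\infty dt/(t\log t)$ pieces are both infinite), so a cutoff $Y$ is unavoidable --- but the two requirements on $Y$ are in conflict. Even ignoring the divergent parts, the surviving piece
\[
\int_x^Y \frac{\log^2 T}{T\,t\log^2 t}\,dt \le \frac{\log^2 T}{T\log x},
\]
scaled by $\sqrt{x}\log^2 x$, contributes $\sqrt{x}\,(\log x)\log^2 T/T$, which already exceeds the target $\sqrt{x}\log^2(xT)/T$ in a wide range such as $T\asymp x$. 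Meanwhile, making the tail negligible requires $Y$ enormous: the PNT bound gives a contribution of size $\sqrt{x}\log^2 x\cdot e^{-c\sqrt{\log Y}}/\log Y$, forcing $\log Y\gg(\log x)^2$, and if you instead keep the zero sum on $[Y,\infty)$ and bound it crudely by $\sum_{|\gamma|<T} Y^{\Re\rho-1}/(|\rho|\log^2Y)$ you pick up a $\log^2 T/\log^2 Y$ term, which needs $Y$ large for a different reason. I do not see a choice of $Y$ that makes both halves small simultaneously and lands on the stated error term. The paper's double-integration trick sidesteps all of this: the explicit-formula error terms are integrated over the auxiliary parameters $a,b\in(1,\infty)$ rather than over $t\in(x,\infty)$, so everything converges, and the troublesome term $\tfrac1T\sum_n\Lambda(n)/n^{a+1/\log x}$ integrates to $\tfrac1T\sum_n\Lambda(n)/(n^{1+1/\log x}\log^2 n)\ll 1/T$ at the final stage. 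You would need to either adopt that mechanism or find a sharper $T$-uniform bound for $\int_x^\infty R(t,T)\,dt/(t^2\log^2 t)$ for your argument to close.
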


\begin{proof}
Our starting point is a tool from Lamzouri, namely \cite[Lemma 2.4]{Lamz}: for any real numbers $a>1$ and $x,T\ge5$,
\begin{align*}
    \sum_{n< x}\frac{\Lambda(n)}{n^a} & = -\frac{\zeta'}{\zeta}(a) + \frac{x^{1-a}}{1-a} - \sum_{|\Im(\rho)|< T}\frac{x^{\rho-a}}{\rho-a}\\
    & \qquad + O\bigg(x^{-a}\log x + \frac{x^{1-a}}{T}\Big(4^a + \log^2 x + \frac{\log^2 T}{\log x}\Big) + \frac{1}{T}\sum_n\frac{\Lambda(n)}{n^{a+1/\log x}} \bigg).
\end{align*}
Then integration with respect to $a$ gives for any $b>1$,
\begin{align*}
    \sum_{n< x}\frac{\Lambda(n)}{n^b\log n} & = \int_b^\infty \sum_{n< x}\frac{\Lambda(n)}{n^a}\;da\\
    & = \log\zeta(b) + \int_b^\infty \frac{x^{1-a}}{1-a}\;da - \sum_{|\Im(\rho)|< T}\int_b^\infty \frac{x^{\rho-a}}{\rho-a}da + E_1,
\end{align*}
where
\begin{align*}
    E_1 \ll x^{-b} + \frac{x^{1-b}}{T}\Big(\frac{4^b}{\log x} + \log x + \frac{\log^2 T}{\log^2 x}\Big) + \frac{1}{T}\sum_n\frac{\Lambda(n)}{n^{b+1/\log x}\log n}.
\end{align*}
Integrating once again with respect to $b$, we have
\begin{align}\label{eq:doubleint}
    \sum_{n< x}\frac{\Lambda(n)}{n\log^2 n}  &= \int_1^\infty \sum_{n< x}\frac{\Lambda(n)}{n^b\log n}\;db\nonumber\\
    &~ = \int_1^\infty \log\zeta(b)\;db + \int_1^\infty\int_b^\infty \frac{x^{1-a}}{1-a}\;da\;db - \sum_{|\Im(\rho)|< T} \int_1^\infty\int_b^\infty \frac{x^{\rho-a}}{\rho-a}\;da\;db + E_2,
\end{align}
where
\begin{align*}
    E_2 & \ll \frac{1}{x\log x} + \frac{1}{T}\Big(\frac{4}{\log^2 x} + 1 + \frac{\log^2 T}{\log^3 x}\Big) + \frac{1}{T}\sum_n\frac{\Lambda(n)}{n^{1+1/\log x}\log^2 n} \\
    & \ll \frac{1}{x\log x} + \frac{1}{T}\Big(1 + \frac{\log^2 T}{\log^3 x}\Big) + \frac{1}{T}\sum_n\frac{\Lambda(n)}{n\log^2 n} \ll \frac{1}{x\log x} + \frac{1}{T}\Big(1 + \frac{\log^2 T}{\log^3 x}\Big),
\end{align*}
since $\sum_n \Lambda(n)/(n\log^2n)\ll1$.

The first term on the right-hand side of equation~\eqref{eq:doubleint} can be written as
\begin{align} \label{first term}
    \int_1^\infty \log\zeta(b)\;db = \int_1^\infty \sum_n\frac{\Lambda(n)}{n^b\log n}\;db = \sum_n \frac{\Lambda(n)}{n\log^2 n}, 
\end{align}
where the Fubini--Tonelli theorem justifies the interchange of summation and integration
since all terms are nonnegative.
The second term on the right-hand side of equation~\eqref{eq:doubleint} evaluates to
\begin{align}
    \int_1^\infty\int_b^\infty \frac{x^{1-a}}{1-a}\;da\;db &= \int_1^\infty \frac{x^{1-a}}{1-a} \bigg(  \int_1^a \;db \bigg) \;da  \notag \\
    &= -\int_1^\infty x^{1-a}\;da = \frac{x^{1-a}}{\log x}\bigg|_1^\infty =-\frac{1}{\log x},  \label{second term}
\end{align}
where the interchange of integrals is again justified by the Fubini--Tonelli theorem.

The double integral inside the series on the right-hand side of equation~\eqref{eq:doubleint} is evaluated using a similar calculation:
\begin{align*}
    -\int_1^\infty\int_b^\infty \frac{x^{\rho-a}}{\rho-a}\;da\;db & = -\int_1^\infty\frac{a-1}{\rho-a}x^{\rho-a}\;da\\
    & = \int_1^\infty x^{\rho-a}\;da - (\rho-1)\int_1^\infty\frac{x^{\rho-a}}{\rho-a}\;da.
\end{align*}
The first integral comes out to $x^{\rho-1}/\log x$, while for the second, integrating by parts twice gives
\begin{align*}
    (\rho-1)\int_1^\infty\frac{x^{\rho-a}}{\rho-a}\;da = \frac{x^{\rho-1}}{\log x} + \frac{x^{\rho-1}}{(\rho-1)\log^2 x} + \frac{2(\rho-1)}{\log^2 x}\int_1^\infty\frac{x^{\rho-a}}{(\rho-a)^3}\;da.
\end{align*}
Letting $u = (a-1)\log x$, we have $a=1+u/\log x$ so the latter integral becomes
\begin{align*}
    \frac{2(\rho-1)}{\log^2 x}\int_1^\infty\frac{x^{\rho-a}}{(\rho-a)^3}\;da =\frac{2 (\rho-1)x^{\rho - 1}}{\log^3x}\int_0^\infty \frac{e^{-u}}{(\rho -1- u/\log x)^3}\;du.
\end{align*}
Note that $|\rho-1 - u/\log x| \ge |\Im(\rho)|$ for all $u\in\R$, so we deduce
\begin{align*}
    \bigg|\frac{2(\rho-1)}{\log^2 x}\int_1^\infty\frac{x^{\rho-a}}{(\rho-a)^3}\;da\bigg| \ll \frac{x^{\Re(\rho)-1}}{\Im(\rho)^2\log^3 x}.
\end{align*}
Thus we have
\begin{align}
    -\int_1^\infty\int_b^\infty \frac{x^{\rho-a}}{\rho-a}da\;db & = \frac{x^{\rho-1}}{\log x} - \bigg(\frac{x^{\rho-1}}{\log x} + \frac{x^{\rho-1}}{(\rho-1)\log^2 x} + O\bigg(\frac{x^{\Re(\rho)-1}}{\Im(\rho)^2\log^3 x}\bigg) \bigg) \nonumber\\
    & = -\frac{x^{\rho-1}}{(\rho-1)\log^2 x} +  O\bigg(\frac{x^{\Re(\rho)-1}}{\Im(\rho)^2\log^3 x}\bigg). \label{third term}
\end{align}

The calculations~\eqref{first term}, \eqref{second term}, and~\eqref{third term} transform equation~\eqref{eq:doubleint} into
\begin{align*}
    \sum_{n< x}\frac{\Lambda(n)}{n\log^2 n} & = \sum_n \frac{\Lambda(n)}{n\log^2 n} -\frac{1}{\log x} - \frac{1}{\log^2 x}\sum_{|\Im(\rho)|< T}\frac{x^{\rho-1}}{\rho-1}\\
    &\qquad\qquad + O\bigg(\frac{1}{x\log x} + \frac1T\bigg(1+\frac{\log^2 T}{\log^3 x}\bigg)+ \frac{1}{\log^3 x}\sum_{|\Im(\rho)|< T}\frac{x^{\Re(\rho)-1}}{\Im(\rho)^2}\bigg)
\end{align*}
and thus
\begin{align*}
  \frac1{\log x}-  \sum_{n\ge  x}\frac{\Lambda(n)}{n\log^2 n} & = - \frac{1}{\log^2 x}\sum_{|\Im(\rho)|< T}\frac{x^{\rho-1}}{\rho-1}\\
    &\quad\qquad + O\bigg(\frac{1}{x\log x} + \frac{1+\log^2 T/\log^3 x}{T} + \frac{1}{\log^3 x}\sum_{|\Im(\rho)|< T}\frac{x^{\Re(\rho)-1}}{\Im(\rho)^2}\bigg).
\end{align*}
The proposition now follows upon comparing this formula to Lemma~\ref{lem:Ep}.
\end{proof}

If we assume the Riemann hypothesis we obtain the following corollary, analogous to \cite[Corollary 2.2]{Lamz}.

\begin{corollary}\label{cor:Ex}
Assume RH, and let $\tfrac{1}{2} + i\gamma$ run over the nontrivial zeros of $\zeta(s)$ with $\gamma>0$. Then, for any real numbers $x, T \ge 5$ we have
\begin{align}
    E_Z(x) & = 1 - 2\Re\sum_{0<\gamma< T} \frac{x^{i\gamma}}{-1/2+i\gamma} + O\bigg(\frac{1}{\log x} + \frac{\sqrt{x}}{T}\log^2(xT)\bigg),
\end{align}
\end{corollary}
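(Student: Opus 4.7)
The plan is to derive this corollary as a direct specialization of the unconditional explicit formula in Proposition~\ref{prop:Ex} under the assumption of RH, so essentially no new ideas are required.

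First I would use RH to write every nontrivial zero as $\rho = \tfrac12 + i\gamma$ with $\gamma\in\R$, so that $\rho - 1 = -\tfrac12 + i\gamma$, $x^{\rho - 1/2} = x^{i\gamma}$, and in particular $x^{\Re(\rho)-1/2} = 1$. Since the nontrivial zeros of $\zeta(s)$ come in complex-conjugate pairs, grouping $\rho = \tfrac12 + i\gamma$ with $\bar\rho = \tfrac12 - i\gamma$ converts the main sum in Proposition~\ref{prop:Ex} into
\[
\sum_{|\Im(\rho)|<T} \frac{x^{\rho-1/2}}{\rho-1} = \sum_{0<\gamma<T}\bigg(\frac{x^{i\gamma}}{-1/2+i\gamma} + \frac{x^{-i\gamma}}{-1/2-i\gamma}\bigg) = 2\,\Re\sum_{0<\gamma<T}\frac{x^{i\gamma}}{-1/2+i\gamma},
\]
which is exactly the main term appearing in the statement of the corollary.

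It then remains to absorb the third error term from Proposition~\ref{prop:Ex}, namely $\frac{1}{\log x}\sum_{|\Im(\rho)|<T}\frac{x^{\Re(\rho)-1/2}}{\Im(\rho)^2}$, into the two error terms $O(1/\log x) + O(\sqrt{x}\log^2(xT)/T)$ stated in the corollary. Under RH the numerator equals $1$, so this sum reduces to $\frac{1}{\log x}\cdot 2\sum_{0<\gamma<T}\gamma^{-2}$. By the classical Riemann--von Mangoldt zero-counting estimate $N(T)\sim (T/2\pi)\log T$, one has $\gamma_n \gg n/\log n$, and hence $\sum_\gamma \gamma^{-2} \ll 1$. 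Therefore this residual contribution is $O(1/\log x)$, which is already subsumed by the stated error term. The deduction is thus essentially mechanical; the one thing to be careful about is the convergence of $\sum_\gamma \gamma^{-2}$, but this is standard, so no substantive obstacle arises once Proposition~\ref{prop:Ex} is in hand.
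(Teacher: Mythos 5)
Your proposal is correct and follows essentially the same route as the paper: specialize Proposition~\ref{prop:Ex} under RH, fold conjugate zeros into $2\Re\sum_{0<\gamma<T}$, and absorb the residual error term via the convergence of $\sum_\gamma \gamma^{-2}$, which the paper likewise deduces from the Riemann--von Mangoldt formula. You merely spell out the conjugate-pairing and the zero-counting bound in slightly more detail than the paper's one-line justification.
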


\begin{proof}
By the Riemann--von Mangoldt formula, 
\begin{align*}
    \bigg|\sum_{|\gamma|< T}\frac{x^{i\gamma}}{\gamma^2}\bigg| \le \sum_{|\gamma|<T}\frac{1}{\gamma^2} \ll 1,
\end{align*}
so the corollary now follows from Proposition~\ref{prop:Ex}.
\end{proof}

\subsection{Density Results}


Since the explicit formula for the Zhang primes in Corollary \ref{cor:Ex} is exactly the same as that of the Mertens primes given by Lamzouri (upon noting a typo in~\cite[Corollary 2.2]{Lamz}, namely,  that ``$E_M(x) = 1+ \cdots$'' should read ``$E_M(x) = 1 - \cdots$''), the analysis therein leads to the following results. Recall that $\mathcal N$ is the set of real numbers for which the Zhang inequality~\eqref{eq:Zhang} holds, and that $E_Z(x)$ is defined in equation~\eqref{EZ defn}.

\begin{theorem}\label{thm:E_Z}
Assume RH. Then
\begin{align*}
0 < \underline \delta(\mathcal N) \le \overline \delta(\mathcal N) < 1.
\end{align*}
Moreover, $E_Z(x)$ possesses a limiting distribution $\mu_N$, that is,
\begin{align*}
    \lim_{x\to\infty}\frac{1}{\log x}\int_2^x f(E_Z(t))\;dt = \int_\R f(t)\; d\mu_N(t)
\end{align*}
for all bounded continuous functions $f$ on $\R$.
\end{theorem}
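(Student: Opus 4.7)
The plan is to transport Lamzouri's analysis~\cite{Lamz} of the Mertens race to the Zhang race; since Corollary~\ref{cor:Ex} puts the explicit formula for $E_Z(x)$ in essentially the same shape as Lamzouri's formula for $E_M(x)$, his argument applies with only cosmetic changes.

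First, I would truncate the explicit formula. Choosing $T = T(X)$ with $T(X) \to \infty$ and $T(X) \gg \sqrt{X}\log^3 X$, Corollary~\ref{cor:Ex} gives
\[
E_Z(x) = \phi_T(x) + R(x,T), \qquad \phi_T(x) := 1 - 2\Re\!\!\sum_{0 < \gamma < T}\frac{x^{i\gamma}}{-1/2 + i\gamma},
\]
with the remainder $R(x,T)$ small in logarithmic $L^2$-mean on $[2,X]$ as $X \to \infty$. For each fixed $T$, the Kronecker--Weyl equidistribution theorem applied to the trajectory $y \mapsto (y\gamma_j)_{j \le N(T)}$ in the torus $(\R/2\pi\Z)^{N(T)}$ shows that $\phi_T(e^y)$ has a limiting logarithmic distribution $\mu_{N,T}$, obtained by pushing forward Haar measure on the closure of the subgroup generated by $(\gamma_1,\ldots,\gamma_{N(T)})$. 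The second-moment identity
\[
\int_\R t^2\, d\mu_{N,T}(t) - 1 = 2\!\!\sum_{0 < \gamma < T}\frac{1}{1/4 + \gamma^2},
\]
combined with $\sum_\gamma \gamma^{-2} < \infty$ (Riemann--von Mangoldt), renders $\{\mu_{N,T}\}_T$ tight and uniformly square-integrable, hence weakly convergent to a probability measure $\mu_N$. Together with the $L^2$-approximation above, this establishes the limiting logarithmic distribution claimed in the second assertion of the theorem.

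For the density bounds, one computes from Corollary~\ref{cor:Ex} that the logarithmic mean of $E_Z$ equals $1$, so $\int_\R t\, d\mu_N(t) = 1 > 0$, which forces $\mu_N((0,\infty)) > 0$ and hence $\underline\delta(\mathcal N) \ge \mu_N((0,\infty)) > 0$. For $\overline\delta(\mathcal N) < 1$, equivalently $\mu_N((-\infty, 0)) > 0$, I would import Lamzouri's corresponding argument for $E_M$: leveraging the nontrivial second moment together with a simultaneous Diophantine approximation argument to align the phases $x^{i\gamma_j}$ of a few dominant terms, one produces $y$-intervals of positive density on which the oscillatory sum exceeds $+1$ with the correct sign, forcing $\phi_T(e^y) < 0$ there.

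The main obstacle is this upper bound $\overline\delta(\mathcal N) < 1$: existence of $\mu_N$ and $\underline\delta(\mathcal N) > 0$ follow from routine $L^2$ and equidistribution machinery, but overcoming the constant term $+1$ on a set of positive density is the delicate Omega-type step. Since Corollary~\ref{cor:Ex} matches the shape of Lamzouri's formula, however, this step imports without new analytic input.
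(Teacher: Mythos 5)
Your proposal is correct and takes essentially the same approach as the paper: the paper itself proves Theorem~\ref{thm:E_Z} simply by observing that Corollary~\ref{cor:Ex} makes the explicit formula for $E_Z(x)$ identical in form to Lamzouri's for $E_M(x)$, and then importing \cite[Theorems 1.1, 1.3 and Propositions 4.1, 4.2]{Lamz} wholesale. Your sketch of truncating the explicit formula, applying Kronecker--Weyl to obtain $\mu_{N,T}$, using the second moment (finite by Riemann--von Mangoldt) for tightness, deriving $\underline\delta(\mathcal N)>0$ from $\int t\,d\mu_N=1$, and invoking a Diophantine-approximation/Omega argument for $\overline\delta(\mathcal N)<1$ is an accurate description of the Lamzouri analysis that the paper defers to.
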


\begin{proposition}
Assume RH and LI. Let $X(\gamma)$ be a sequence of independent random variables, indexed by the positive imaginary parts of the non-trivial zeros of $\zeta(s)$, each of which is uniformly distributed on the unit circle. Then $\mu_N$ is the distribution of the random variable
\begin{align}
    Y = 1 - 2\Re \sum_{\gamma > 0}\frac{X(\gamma)}{\sqrt{1/4+\gamma^2}}.
\end{align}
\end{proposition}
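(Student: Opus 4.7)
The plan is to identify $\mu_N$ by following the Rubinstein--Sarnak/Lamzouri Fourier-analytic framework, exploiting the fact that the explicit formula in Corollary~\ref{cor:Ex} has exactly the same shape as the one Lamzouri established for the Mertens race error $E_M(x)$, so that his analysis transports essentially verbatim. The first step is to extract the modulus and phase of each contribution: writing $1/(-1/2+i\gamma) = e^{i\psi(\gamma)}/\sqrt{1/4+\gamma^2}$ for a deterministic $\psi(\gamma)$, Corollary~\ref{cor:Ex} becomes
\[
E_Z(x) = 1 - \sum_{0<\gamma<T}\frac{2\cos(\gamma\log x + \psi(\gamma))}{\sqrt{1/4+\gamma^2}} + O\!\left(\frac{1}{\log x} + \frac{\sqrt{x}\log^2(xT)}{T}\right),
\]
which already displays the coefficient $1/\sqrt{1/4+\gamma^2}$ appearing in the definition of $Y$.

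Next, for a fixed truncation parameter $T_0$, I would analyze the finite sum over zeros with $0<\gamma<T_0$ via Kronecker--Weyl equidistribution. Under LI, the ordinates $\{\gamma: 0<\gamma<T_0\}$ are $\Q$-linearly independent, so the vector $(\gamma \log u \bmod 2\pi)_{\gamma<T_0}$ is equidistributed on the finite torus $\mathbb{T}^{N(T_0)}$ with respect to logarithmic measure as $u\to\infty$. Since the uniform measure on $\mathbb{T}$ is rotation-invariant, the deterministic phases $\psi(\gamma)$ can be absorbed, and the limiting logarithmic distribution of the truncated oscillatory sum is precisely that of the finite random variable $Y_{T_0} = 1 - 2\Re\sum_{0<\gamma<T_0}X(\gamma)/\sqrt{1/4+\gamma^2}$.

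Finally I would pass to the limit $T_0\to\infty$ from both sides. On the probabilistic side, Riemann--von Mangoldt gives $\sum_{\gamma}1/\gamma^2 < \infty$, so the series defining $Y$ converges in $L^2$ and $Y_{T_0}\to Y$ in distribution. On the deterministic side, one needs a mean-square estimate showing that the contribution of zeros with $\gamma \ge T_0$ to the oscillatory sum has logarithmic $L^2$-norm bounded by $\sum_{\gamma\ge T_0}1/\gamma^2$, which tends to $0$ as $T_0\to\infty$; this, combined with the explicit error term in Corollary~\ref{cor:Ex} (negligible for, e.g., $T = x^{1/2}(\log x)^{-A}$), reconciles $\mu_N$ with the distribution of $Y$ by approximating Fourier transforms.

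The main obstacle will be this last step: justifying the tail bound and exchange of limits uniformly in $x$, so that the truncated log-distribution really converges to both $\mu_N$ and to the law of $Y$. However, because the coefficients $1/\sqrt{1/4+\gamma^2}$ and the shape of the error term in Corollary~\ref{cor:Ex} coincide with those in Lamzouri's \cite[Corollary~2.2]{Lamz}, the required second-moment estimates and truncation arguments are precisely those already carried out in \cite{Lamz}, and need only be quoted rather than redone.
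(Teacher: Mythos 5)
Your proposal is correct and takes essentially the same approach as the paper: after establishing Corollary~\ref{cor:Ex}, the paper simply observes that the explicit formula for $E_Z(x)$ coincides with Lamzouri's formula for $E_M(x)$ and quotes his Propositions 4.1--4.2 in \cite{Lamz}. Your sketch just unpacks the Rubinstein--Sarnak/Lamzouri machinery (Kronecker--Weyl equidistribution for the finite truncation under LI, plus $L^2$ tail estimates via $\sum_\gamma \gamma^{-2}<\infty$) that this citation invokes, and then defers to the same source for the technical estimates, exactly as the paper does.
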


\begin{theorem} \label{Zhang primes density thm}
Assume RH and LI. Then $\delta(\mathcal N)$ exists and equals  $1-\Delta$. Hence by Theorem \ref{thm:density}, the relative logarithmic density of the Zhang primes is $1-\Delta$.
\end{theorem}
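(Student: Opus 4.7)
The plan is to first establish $\delta(\mathcal N) = 1 - \Delta$ by matching $\mu_N$ against the Rubinstein--Sarnak limiting distribution of $E_\pi$, and then to invoke Theorem~\ref{thm:density} applied to $f = E_Z$ in order to promote this real-number density to the relative density over primes.

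For the first step, the Proposition above realizes $\mu_N$ as the law of $Y = 1 - 2\Re \sum_{\gamma > 0} X(\gamma)/\sqrt{1/4 + \gamma^2}$. Since each $X(\gamma)$ is uniform on the unit circle, $-X(\gamma) \stackrel{d}{=} X(\gamma)$, so $Z := 2\Re \sum_{\gamma > 0} X(\gamma)/\sqrt{1/4 + \gamma^2}$ is symmetric about $0$ and $Y \stackrel{d}{=} 1 + Z$. The parallel Rubinstein--Sarnak analysis of $E_\pi$ under RH and LI gives a limiting distribution equal in law to $-1 + Z$: their explicit formula attaches the coefficient $1/(1/2+i\gamma)$ to $x^{i\gamma}$ rather than $1/(-1/2+i\gamma)$, but the two have the same modulus $1/\sqrt{1/4+\gamma^2}$, and the differing phases are absorbed by the rotational invariance of $X(\gamma)$. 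Hence $\Delta = P(Z > 1)$, and the symmetry of $Z$ together with continuity of its distribution give
\[
\delta(\mathcal N) = P(Y \ge 0) = P(Z \ge -1) = P(Z \le 1) = 1 - \Delta,
\]
continuity also ensuring that $\delta(\mathcal N)$ exists as an honest limit (rather than only a $\liminf$/$\limsup$).

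For the second step, I would verify conditions~(a) and~(b) of Theorem~\ref{thm:density} with $f = E_Z$. Condition~(b)---continuity of the limiting logarithmic distribution---is provided by Theorem~\ref{thm:E_Z} and the Proposition, the continuity of $\mu_N$ itself following from rapid decay of its Fourier transform under LI, as in the $E_\pi$ and $E_M$ cases. For condition~(a), write $E_Z(x) = g(x)\sqrt{x}\log^2 x$ with $g(x) = 1/\log x - \sum_{p \ge x} 1/(p\log p)$. For $z \in [x, x+x^{1/3}]$, both $1/\log z - 1/\log x$ and $\sum_{x \le p < z} 1/(p\log p)$ are $\ll 1/(x^{2/3}\log x)$ (the latter from the trivial bound $\pi(z) - \pi(x) \le x^{1/3}$), while Corollary~\ref{cor:Ex} under RH forces $g(x) \ll 1/\sqrt{x}$, so the dilation factor $\sqrt{z}\log^2 z - \sqrt{x}\log^2 x$ contributes at most $\ll \log^2 x/x^{2/3}$. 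Combining these estimates yields $|E_Z(z) - E_Z(x)| \ll (\log x)/x^{1/6} \to 0$, giving condition~(a). Theorem~\ref{thm:density} then delivers $\delta^*(\mathcal N) = \delta(\mathcal N) = 1 - \Delta$, and the partial summation identity from the introduction promotes this to $\delta'(\mathcal N) = 1 - \Delta$, as required.

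The main conceptual hurdle is the distributional matching in the first step: without the observation that $|1/(1/2+i\gamma)| = |1/(-1/2+i\gamma)| = 1/\sqrt{1/4+\gamma^2}$ and that each $X(\gamma)$ is uniform on the circle (hence invariant under rotation by any fixed phase), the different coefficients appearing in the two explicit formulas would appear to produce genuinely different distributions, whereas in fact they yield precise mirror images once one averages over the uniform phases.
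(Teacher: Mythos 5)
Your proposal is correct and follows the same underlying route as the paper: match the limiting distribution $\mu_N$ to the Rubinstein--Sarnak distribution of $E_\pi$ via rotational invariance of the $X(\gamma)$ (using $|1/(1/2+i\gamma)| = |1/(-1/2+i\gamma)| = 1/\sqrt{1/4+\gamma^2}$ and symmetry of $Z$), then apply Theorem~\ref{thm:density} after verifying conditions (a) and (b) for $f=E_Z$. The paper is terser---it simply observes that the explicit formula in Corollary~\ref{cor:Ex} coincides with Lamzouri's for $E_M$ and declares the density results ``completely analogous'' to Lamzouri's Theorem 1.3 and Propositions 4.1--4.2---so you have fleshed out the distributional matching and the short-interval Lipschitz estimate for $E_Z$ that the paper delegates or leaves implicit, but the mathematical content is the same.
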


These results are completely analogous to Theorems 1.1 and 1.3 and Propositions 4.1 and 4.2 from~\cite{Lamz}.

Before moving on, we note a further consequence of the fact that $E_M(x)$ and $E_Z(x)$ possess the same explicit formula, namely that the symmetric difference of Mertens primes and Zhang primes has relative logarithmic density~$0$.
\begin{corollary}
Assume RH and LI. Then we have $\delta(S) = \delta^*(S) = 0$ for the symmetric difference 
$S=S_1\cup S_2$, where
$$
S_1=\{x:E_M(x)>0\ge E_Z(x)\}\quad\text{and}\quad S_2=\{x:E_Z(x)>0\ge E_M(x)\}.
$$
\end{corollary}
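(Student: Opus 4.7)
The plan is to exploit the fact, noted just above the statement, that $E_M(x)$ and $E_Z(x)$ admit the \emph{same} explicit formula in terms of the nontrivial zeros of $\zeta(s)$. Upon subtraction the oscillatory main terms cancel, leaving only an error that tends to $0$; once $E_M(x)-E_Z(x)\to 0$, any disagreement in sign forces both values to lie very close to $0$, and continuity of the limiting distribution of $E_M$ (together with Theorem~\ref{thm:density}) then drives both $\delta(S)$ and $\delta^*(S)$ to~$0$.

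Concretely, I would first compare Corollary~\ref{cor:Ex} with Lamzouri's formula for $E_M(x)$ (with the typo corrected as noted); taking $T=x$ in both yields
$$
E_M(x) - E_Z(x) \ll \frac{1}{\log x} + \frac{(\log x)^2}{\sqrt x} \to 0 \quad\text{as } x\to\infty.
$$
For $x\in S_1$ we have $E_M(x)>0\ge E_Z(x)$, and hence $|E_M(x)|=E_M(x)\le E_M(x)-E_Z(x)$; symmetrically for $x\in S_2$. Consequently, for every $\eta>0$ there is an $x_0=x_0(\eta)$ with $S\cap [x_0,\infty)\subseteq \{x\colon |E_M(x)|<\eta\}$.

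The proof of Theorem~\ref{thm:mertens} shows that $E_M$ satisfies conditions~(a) and~(b) of Theorem~\ref{thm:density}, so both $a\mapsto \delta(\mathcal M_a(E_M))$ and $a\mapsto \delta^*(\mathcal M_a(E_M))=\delta(\mathcal M_a(E_M))$ are continuous. Noting that $\{x\colon |E_M(x)|<\eta\}$ differs from $\mathcal M_{-\eta}(E_M)\setminus \mathcal M_\eta(E_M)$ only by $\{x\colon E_M(x)=\eta\}$, which has both densities equal to $0$ by continuity (``ties have density~$0$''), we obtain
$$
\delta(S) \le \delta(\mathcal M_{-\eta}(E_M)) - \delta(\mathcal M_\eta(E_M))
\quad\text{and}\quad
\delta^*(S) \le \delta^*(\mathcal M_{-\eta}(E_M)) - \delta^*(\mathcal M_\eta(E_M)).
$$
Letting $\eta\to 0^+$, continuity forces both right-hand sides to~$0$, giving $\delta(S)=\delta^*(S)=0$. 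The one nontrivial point is the first step: checking that Lamzouri's term-by-term derivation for $E_M$ produces the same zero-sum and the same error term as in Corollary~\ref{cor:Ex}. This is routine given the parallel structures of the two proofs, but it is where all the content of the corollary sits.
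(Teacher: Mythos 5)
Your proposal is correct and takes essentially the same route as the paper: both start from the observation that Corollary~\ref{cor:Ex} and Lamzouri's Corollary~2.2 (typo corrected) give $E_M(x)-E_Z(x)=o(1)$, then deduce that eventually $S$ is contained in a thin window around $0$ for one of the error functions, and finally use the continuity of $a\mapsto\delta(\mathcal M_a)=\delta^*(\mathcal M_a)$ from Theorem~\ref{thm:density}/\ref{thm:E_Z} to shrink that window's density to~$0$. The one cosmetic difference is that you bound $|E_M(x)|$ directly (which treats $S_1$ and $S_2$ in a single stroke), whereas the paper sandwiches $E_Z$ into $(-\eta,0]$ for $S_1$ and then swaps the roles of $E_M$ and $E_Z$ for $S_2$; your version is if anything a touch cleaner. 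Two small points: writing $\delta(S)\le\cdots$ presupposes the density exists, so strictly you should bound $\overline{\delta}(S)$ and $\overline{\delta}^*(S)$ before concluding they vanish; and the ``tie set'' remark, while harmless, is unnecessary since $\{x:|E_M(x)|<\eta\}$ is already a subset of $\mathcal M_{-\eta}(E_M)\setminus\mathcal M_\eta(E_M)$.
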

\begin{proof}
Take $\eta > 0$. 
Combining \cite[Corollary 2.2]{Lamz} with Corollary \ref{cor:Ex} and letting $T$ tend to infinity, we find that
\begin{align}\label{eq:symdiff}
    \big|E_M(x) - E_Z(x) \big|= O\Big(\frac{1}{\log x}\Big).
\end{align}
Let $c$ be the implied constant in equation~\eqref{eq:symdiff}. Thus for all $x \ge e^{c/\eta}$, if $E_M(x) > 0$ then $E_Z(x) > -\eta$. This means that
\begin{align*}
\delta^*(S_1) & \le \delta^*(\{x : E_Z(x) > 0\}) - \delta^*(\{x : E_Z(x) > -\eta\})\\
& = \delta(\{x : E_Z(x) > 0\}) - \delta(\{x : E_Z(x) > -\eta\}),
\end{align*}
which tends to 0 as $\eta \to 0$ by continuity, using Theorem \ref{thm:density} and Theorem \ref{thm:E_Z}. Since this holds for all $\eta >0$, we conclude that $\delta^*(S_1) = 0$. Interchanging the roles of $E_M$ and $E_Z$ proves $\delta^*(S_2) = 0$, and thus $\delta^*(S) = \delta^*(S_1\cup S_2)=0$. A similar argument (simpler even, without the appeal to Theorem \ref{thm:density}) shows that $\delta(S)=0$.
\end{proof}

We also remark, however, that the analogous argument does not work for $E_\pi$. This is because the relevant series over nontrivial zeros is $\sum_{\rho}x^{\rho-1}/(\rho-1)$ for $E_M$ and $E_Z$, while for $E_\pi$ it is $\sum_{\rho}x^\rho/\rho$. Assuming RH, this amounts to the observation that the two series $\sum_{\gamma}x^{i\gamma}/(-1/2+i\gamma)$
and $\sum_{\gamma}x^{i\gamma}/(1/2+i\gamma)$ are not readily comparable for a given $x$---even though, by symmetry, both do possess the same limiting distribution, which explains the appearance of $\delta(\Pi)=\Delta$ in results on the Mertens and Zhang races. 

The analogous problem of determining the density of the symmetric difference between the Mertens/Zhang primes and the $\li$-beats-$\pi$ primes is an interesting problem for further investigation; it would presumably proceed by examining the two-dimensional limiting distribution of the ordered pair of normalized error terms, and understanding how the correlations of the two functions' summands impacts the two-dimensional limiting distribution.

\section{Other series over prime numbers}
Before concluding our analysis, we remark that similar considerations apply more generally to series of the form $\sum_{p}p^\alpha (\log p)^{k+1}$, where $k\in\Z$ and $\alpha\in\R$. The basic approach is to first relate the sum of interest to the corresponding sum over prime powers via
\begin{align*}
    \sum_{p}\frac{\log p}{p^\alpha}(\log p)^k = \sum_{n} \frac{\Lambda(n)}{n^\alpha}(\log n)^k - \sum_{p^m, m\ge 2} \frac{m^k(\log p)^{k+1}}{p^{m\alpha}}.
\end{align*}
The next step is to employ an exact formula relating the sum over prime powers to series over zeros of $\zeta(s)$. For example, von Mangoldt's exact formula states that
\begin{align}
    \sum_{n\le x}\Lambda(n) = x - \frac{\zeta'}{\zeta}(0) - \sum_{\rho}\frac{x^{\rho}}{\rho} + \sum_{m\ge1} \frac{x^{-2m}}{-2m}
\end{align}
provided $x$ is not a prime power. The above formula naturally generalizes to any real exponent $\alpha$. Namely, one may prove by Perron's formula (c.f. \cite[Lemma 2.4]{Lamz}) that
\begin{align}
    \sum_{n\le x}\frac{\Lambda(n)}{n^\alpha} = \frac{x^{1-\alpha}}{1-\alpha} - \frac{\zeta'}{\zeta}(\alpha) - \sum_{\rho}\frac{x^{\rho-\alpha}}{\rho-\alpha}  + \sum_{m\ge1} \frac{x^{-2m-\alpha}}{-2m-\alpha}.
\end{align}
provided $x$ is not a prime power, and $\alpha$ is neither 1 nor a negative even integer. Note that when $\alpha>1$, we have $-(\zeta'/\zeta)(\alpha) = \sum_n \Lambda(n)/n^\alpha$ so we may simplify the above as
\begin{align}
    \sum_{n\ge  x}\frac{\Lambda(n)}{n^\alpha} = -\frac{x^{1-\alpha}}{1-\alpha} + \sum_{\rho}\frac{x^{\rho-\alpha}}{\rho-\alpha}  - \sum_{m\ge1} \frac{x^{-2m-\alpha}}{-2m-\alpha}.
\end{align}
To gain factors of $\log n$ in the numerator, we differentiate with respect to $\alpha$. Specifically, since $d/d\alpha[x^{c-\alpha}/(c-\alpha)] = (-\log x + 1/(c-\alpha))x^{c-\alpha}/(c-\alpha)$ for any $c\in\C$, by induction one  may show
$$\frac{d^k}{d\alpha^k}\Big[\frac{x^{c-\alpha}}{c-\alpha}\Big] = (-\log x)^k\bigg(\frac{x^{c-\alpha}}{c-\alpha} + O_k\Big(\frac1{\log x}\Big)\bigg)$$
This implies, for all $k\ge1$,
\begin{align*}
  &  \sum_{n\ge  x}\frac{\Lambda(n)}{n^\alpha}(\log n)^k \\
   &~ = (\log x)^k\bigg(-\frac{x^{1-\alpha}}{1-\alpha} + \sum_{\rho}\frac{x^{\rho-\alpha}}{\rho-\alpha}  - \sum_{m\ge1} \frac{x^{-2m-\alpha}}{-2m-\alpha}\bigg)\Big( 1 + O_k\Big(\frac{1}{\log x} \Big)\Big).
\end{align*}
Similarly for integration, we have 
$$\int_\alpha^\infty \frac{x^{c-\beta}}{c-\beta}\;d\beta = \li(x^{c-\alpha}) = (1 + O(1/\log x))\frac{x^{c-\beta}}{c-\beta},$$
so an induction argument will establish the exact formula for negative integers $k$.

From here, all that remains is to analyze the sum over nontrivial zeta zeros. Assuming RH, it suffices to consider the series $\sum_{\gamma}x^{i\gamma}/(1/2-\alpha+i\gamma)$. Further assuming LI, this series has a limiting distribution, which may be computed explicitly (as in \cite{ANS,RubSarn}).

\section*{Acknowledgments}
We are grateful for a helpful discussion with Dimitris Koukoulopoulos.
The first-named author thanks the office for undergraduate research at Dartmouth College, and is 
currently supported by a Churchill Scholarship at the University of Cambridge.
The second-named author was supported in part by a National Sciences and Engineering Research Council of Canada Discovery Grant.  The second- and third-named authors are grateful to the Centre de Recherches Math\'ematiques for
their hospitality in May, 2018 when some of the ideas in this paper were discussed.

\bibliographystyle{amsplain}

\end{document}